\documentclass[12 pt, reqno]{amsart}
\usepackage{amsmath, amscd, graphicx, latexsym, times, }
\usepackage{amsmath,amssymb,amsthm,graphicx}
\usepackage[usenames]{color}
\usepackage[shortlabels]{enumitem}
\usepackage{hyperref}
\usepackage{xparse}
\markboth
\usepackage{float}
\usepackage{morefloats}
\usepackage{lipsum}
\usepackage{mathtools}
\usepackage{xcolor}

\usepackage{xparse}
\markboth
\usepackage{float}
\usepackage{mathtools}

\newtheorem{theorem}{Theorem}[section]
\newtheorem*{theorem-non}{Theorem}
\newtheorem{prop}[theorem]{Proposition}
\newtheorem{lemma}[theorem]{Lemma}
\newtheorem{remark}[theorem]{Remark}
\newtheorem{question}[theorem]{Question}
\newtheorem{definition}[theorem]{Definition}

   \oddsidemargin  0.5in
    \evensidemargin 0.5in
    \textwidth      5.5in
    \headheight     0.0in
    \topmargin      0.3in
    \textheight=8.5in

\newcommand\Z{{\mathbb Z}}

\newcommand{\CPb}{\overline{\mathbb{CP}}^{2}}
\newcommand{\CP}{{\mathbb{CP}}^{2}}

\parskip.05in

\def \CPb {\overline{\mathbb{CP}}^{2}}
\def \CP {{\mathbb{CP}}^{2}} 

\def \Z {\mathbb{Z}}

\def \- {\setminus}

\def \Q {\mathbb{Q}}

\makeatletter
\@namedef{subjclassname@2020}{%
  \textup{2020} Mathematics Subject Classification}
\makeatother

\title[Symplectic 4-Manifolds on the Noether line and between Noether and half Noether lines]{Symplectic 4-manifolds on the Noether line and between the Noether and half Noether lines}

\begin{document}
\author{S\"{u}meyra Sakalli}
\address{Department of Mathematical Sciences,
University of Arkansas,
Fayetteville, AR, 72701, USA}
\email{ssakalli@uark.edu}

%\@namedef{subjclassname@2010}{% \textup{2010} Mathematics Subject Classification}
\subjclass[2020]{Primary 57K43, 57K41, 57R55  ; Secondary 14J17, 14J27, 32Q15}

\keywords{symplectic 4-manifolds, exotic smooth structures, complex singularities, Seiberg-Witten basic class}

\maketitle

\subsection*{Abstract}

We construct simply connected, minimal, symplectic 4-manifolds with exotic smooth structures and each with one Seiberg-Witten basic class up to sign, on the Noether line and between the Noether and half Noether lines by star surgeries introduced by Karakurt and Starkston, and by using complex singularities. We also construct certain configurations of complex singularities in the rational elliptic surfaces geometrically, without using any monodromy arguments. By using these configurations, we give symplectic embeddings of star shaped plumbings inside (some blow-ups of) elliptic surfaces.

\section{Introduction}

\label{intro}
For a closed, simply connected, symplectic $4$-manifold $X$, a pair of invariants are defined as follows: $\chi_h(X) := (e(X) + \sigma(X))/4$ and $c_{1}^{2} (X) := 2e(X) + 3\sigma(X)$, where $e(X)$ and $\sigma(X)$ denote the Euler characteristic and the signature of $X$, respectively. The $(\chi_h, c_1^2)$-plane is called the geography chart on which the following lines 

\begin{equation}
c_1^2 = 2\chi_h - 6 \;\;\; \text{and}\;\;\; c_1^2 = \chi_h - 3
\end{equation}

\noindent are called the Noether and half Noether lines, respectively. Note that for minimal complex surfaces $S$ of general type, the Noether inequality $c_1^2(S) \geq 2\chi_h (S) - 6$ holds (see e.g. \cite{BHPV}). Moreover, it is known that all minimal complex surfaces of general type have exactly one (Seiberg-Witten) basic class, up to sign \cite{W}. Thus, it is natural to ask if one can construct smooth 4-manifolds with one basic class. %Let us recall that Seiberg-Witten basic classes are elements of the second cohomology of $X$ which are mod 2 equivalent to second Stiefel Whitney class of $TX$ and does not vanish under the SW function.
In \cite{FS1}, Fintushel and Stern built a family of simply connected, spin, smooth, nonsymplectic 4-manifolds with one basic class. Then, Fintushel, Park and Stern constructed a family of simply connected, noncomplex, {\em symplectic} 4-manifolds with one basic class which fill the region between the half-Noether and Noether lines in the $(\chi_h, c_1^2)$-chart \cite{FPS}. Later Akhmedov constructed infinitely many simply connected, nonsymplectic and pairwise nondiffeomorphic 4-manifolds with nontrivial Seiberg-Witten invariants \cite{A}. Park and Yun also gave a construction of an infinite family of simply connected, nonspin, smooth, nonsymplectic 4-manifolds with one basic class \cite{PY}. All these manifolds were obtained via knot surgeries, blow-ups and rational blow-downs. 

In \cite{KS}, Karakurt and Starkston introduced {\em star surgeries} which are new 4–dimensional symplectic operations. A star surgery is the operation of cutting out the neighborhood of a star shaped plumbing of symplectic 2-spheres inside a symplectic 4-manifold, and replacing it with a convex symplectic filling of strictly smaller Euler characteristic. %In \cite{KS} many applications of this operation in constructing small exotic 4-manifolds were also given. By exotic 4-manifolds we mean smooth 4-manifolds which are homeomorphic but not diffeomorphic to each other. 
Also in \cite{Star2}, Starkston showed that infinitely many star surgeries are not equivalent to any sequences of generalized symplectic rational blow-downs.

In this paper we give new constructions of simply connected, minimal and symplectic 4-manifolds on the Noether line and between the Noether and half Noether lines by using various types of star surgeries and complex singularities. We also show that each of our manifolds has one Seiberg-Witten basic class up to sign, and an exotic smooth structure. By the latter we mean that they are homeomorphic but not diffeomorphic to the manifolds with standard smooth structures. We would like to note that in \cite{Ham}, symplectic 4-manifolds on the Noether line with bigger Euler characteristics were built. %Example 6.24
On the other hand, in the literature there are different constructions of symplectic 4-manifolds having the same topological invariants (e.g. \cite{FPS, ABKP}). However, here we give a completely different construction and we do not know if our manifolds are diffeomorphic to the previously constructed ones. In fact, giving different constructions of smooth or symplectic 4-manifolds with the same invariants is interesting and an active research area. For instance, see \cite{P2, SS1, FS3, Mich, KS, AM, AS2} for distinct constructions of symplectic 4-manifolds which are all exotic copies of $\CP \# 6\CPb$ and $\CP \# 7\CPb$. However, as of today, it is not known how to distinguish the smooth structures of symplectic, exotic 4-manifolds that have the same topological invariants but are obtained in different ways, and it is an intriguing question.

Let us give the outline of the paper. In sections \ref{SW}, and \ref{recap} we give brief background on the Seiberg-Witten invariants and four types of star surgeries of \cite{KS}, respectively. In Section \ref{2nd}, we construct three different configurations of $I_n$ singularities in the rational elliptic surface $E(1) := \CP \# 9\CPb$, where the $I_n$ singular fiber, for $n\geq 2$, is a plumbing of $n$ complex 2-spheres of self intersections -2 arranged in a cycle and was given by Kodaira in his famous work \cite{Kod}. Indeed, in \cite{SSS}, Section 8.4 the authors constructed {\em a single} $I_n$ fiber in $E(1)$ from a pencil. However they noted the following fact: ``To understand the other fibers in such a(n elliptic) fibration (over $S^2$) is considerably harder, and when studying more singular fibers, we rather use the monodromy theoretic approach." In our work, without using any monodromies, in a completely geometric way we construct three configurations of $I_n$ fibers with sections in $E(1)$, where each of the configurations has more than one $I_n$ singularity. In each construction we start with a different pencil of cubic curves and obtain a different configuration of $I_n$ fibers with sections. Next, by using these three configurations, in Section \ref{plumbings}, we build four types of star plumbings. For each plumbing, we give three different ways of embedding it inside (some blow-ups of) an elliptic surface, symplectically. Then, in sections \ref{onNoe}, \ref{4th}, \ref{above}, via star surgeries we construct simply connected, minimal, symplectic and exotic 4-manifolds each of which has one Seiberg-Witten basic class up to sign, and lying on the Noether line; between the Noether and half Noether lines; and above the Noether line, respectively.

\subsection{Conventions and Notations}

\label{Conventions}
It is well-known that blow-ups and blow-downs can be done symplectically thanks to McDuff's result \cite{Mc}, and on this paper all blow-ups are performed in the symplectic category. (For an excellent exposition of these operations in the complex and symplectic categories, the reader may see \cite{McDS}). 

Let us end this section by recalling the fiber sum operation. First, an elliptic surface is a complex surface which admits a genus one fibration over a complex curve with finitely many singular fibers. %Here the map onto the complex curve is holomorphic.
We take two elliptic surfaces $S_1,\;S_2$, from each we take out regular neighborhoods of the generic fibers $T^2 \times D^2$. Then we glue the remaining pieces $S_i \setminus (T^2 \times D^2)$ along their boundaries by a fiber preserving, orientation reversing diffeomorphism. This operation is called the fiber sum and the resulting manifold $S_1 \#_f S_2$ also admits an elliptic fibration. In the rest of the paper, $E(n)$ denotes the elliptic surface which is the $n$-fold fiber sum of copies of $E(1) := \CP$, where $E(1)$ is equipped with an elliptic fibration. In particular $E(n) = E(n-1) \#_f E(1)$, with $e(E(n))= 12n$, $\sigma(E(n))=-8n$ and $\pi_1(E(n))= 1$ (Example 5.2 in \cite{G}, Chapter 3 in \cite{GS}). Moreover $E(n)$ could be described as an n-cyclic branched cover of $E(1)$ and hence it admits a complex structure (\cite{GS}, Remark 3.1.8 and 7.3.11). In addition, since $E(n)$ is K\"{a}hler, it is symplectic. In this paper, we consider $E(n)$ as a symplectic 4-manifold.

\subsection*{Acknowledgements} 
I would like to thank Anar Akhmedov for his comments on an earlier draft of this paper and for many helpful discussions. I thank Tian-Jun Li for his comments and pointing out a typo. I am grateful  to \c{C}a\u{g}r{\i} Karakurt and Laura Starkston for many correspondences and their sparing time on my questions. I would like to thank the referee for their constructive and positive remarks which improved this manuscript in great amount. I also acknowledge the financial support and hospitality of the Max Planck Institute for Mathematics, Bonn where most of this work was done during my stay as a postdoctoral fellow.

\section{Background on Seiberg-Witten invariants}
\label{SW}

%In this section, let us briefly recall basics on Seiberg-Witten theory.
In this section, let us give some background information on the Seiberg-Witten (SW) invariants by following \cite{Szabo, GS}. Let $X$ be a smooth closed oriented 4-manifold with $b_2^+ (X)>1$. The Seiberg-Witten invariant of $X$ is an integer valued function defined on the set of $\text{spin}^c$ structures over $X$. If $H_1(X,\Z)$ has no 2-torsion we use the one-to-one correspondence between the set of $\text{spin}^c$ structures over $X$ and set of characteristic elements in $H^2(X,\Z)$. After fixing a homology orientation, we have
\begin{equation*}
SW_X :\{K \in H^2(X,\Z) | K \equiv w_2(TX) \text{(mod 2)} \} \rightarrow \Z.
\end{equation*}
$K$ is called a {\em basic class} of $X$ if $SW_X(K) \neq 0$, and ${Bas}_X$ denotes the set of basic classes of $X$.  %For a simply connected, smooth 4-manifold $X'$ with  $b_2^+ (X')>1$ and odd, $SW_{X'}$ is a diffeomorphism invariant.

Let $g$ be a Riemannian metric on $X$ and $h$ be an arbitrary closed real-valued self-dual 2-form on $X$. Then, the perturbed SW moduli space $\mathcal{M}_X (K, g, h)$ is defined as the solution space of the SW equations
\begin{equation*}
F_A^+ =q(\phi)+ih, \;\; D_A\phi=0
\end{equation*}
divided by the gauge-group, where $A$ is an $S^1$ connection on the line bundle $L$ with $c_1(L) = K$, $F_A^+$ is the self-dual part of the curvature of $A$, $q$ is a certain quadratic map, $\phi$ is a section of the positive spin bundle corresponding to the $\text{spin}^c$ structure determined by $K$, and $D_A$ is the Dirac operator coupled with $A$.

If $b_2^+ (X) \geq 1$ and $h$ is generic, then the moduli space $\mathcal{M}_X (K, g, h)$ is a closed manifold with formal dimension 
\begin{equation}
d = (K^2 - 3 \sigma(X) - 2e(X))/4.
\end{equation}
Here $d < 0$ implies that $\mathcal{M}_X (K, g, h)$ is empty, in this case $SW_X(K) = 0$ by definition. In the $d \geq 0$ case we have
\begin{equation*}
SW_X (K, g, h) := <[\mathcal{M}_X (K, g, h)], \mu^{d/2} >
\end{equation*}
where $\mu \in H^2(\mathcal{M}_X (K, g, h)), \Z)$ is the Euler class of the base fibration.

Recall that a simply connected smooth 4-manifold $X$ is said to be {\em of simple type} if each basic class $K$ satisfies the equation $K^2 = c_1^2(X) = 3 \sigma(X)+2 \chi(X)$. Now let us give a generalized blow-up formula.
\begin{theorem} \cite{GS, FS0}
Assume that a simply connected, smooth 4-manifold $X'$ decomposes as $X'=X \# N$, where $X$ is of simple type. If $b_2^+(N) =0$ from where $H^2(N,\Z)$ has an orthogonal basis $\{E_i \in H^2(N,\Z)\;|\; i=1,2, \cdots, b_2(N)\}$ with $E_i^2 =-1$, then $Bas_{X'} = \{K_i \pm E_1 \pm \cdots \pm E_{b_2(N)} \;|\; K_i \in Bas_X\}$.
\end{theorem}

The basic classes of the elliptic surface $E(n)$, $n\geq2$ are given as follows.

\begin{prop} \cite{FS2}, Corollary 3.1.15 in \cite{GS} 
For $n\geq 2$, $Bas_{E(n)}= \{PD(k \cdot f) \in H^2(E(n),\Z) \; | \; k \equiv n \;\text{(mod 2)}, |k| \leq n-2\}$
\end{prop}
 where $f$ is the homology class of the fiber of $E(n)$ and $PD$ means taking the Poincar\'{e} dual of the homology class.

Let us also recall:

\begin{theorem} \cite{Mich}
Suppose $Y$ is a rational homology sphere which is a monopole $L$-space. Let $P$ and $B$ be negative definite 4-manifolds with $b_1(P) = b_1(B) =0$ and $\partial P = \partial B =Y$. Let $X= Z \cup_Y P$ and $X' = Z \cup_Y B$ for some 4-manifold $Z$. If $s \in Spin^c(X), \; s' \in Spin^c(X')$, $d_X(s), d_{X'}(s') \geq 0$ and $s|_Z = s'|_Z$ then $SW_X(s) = SW_{X'}(s')$.
\label{Mich}
\end{theorem}

\section{Recapping $(\mathcal{Q,R}), (\mathcal{K,L}), (\mathcal{S}_2,\mathcal{T}_2), (\mathcal{U,V})$-star surgeries}
\label{recap}
In this section we will review $(\mathcal{Q,R}), (\mathcal{K,L}), (\mathcal{S}_2,\mathcal{T}_2), (\mathcal{U,V})$-star surgeries from \cite{KS} briefly. Let us begin with the $(\mathcal{Q,R})$ surgery. $\mathcal{Q}$ is the configuration of symplectic spheres which intersect according to a star shaped graph with 4 arms. The central vertex $u_0$ is a -5 sphere, and the arms respectively contain one -3 sphere $u_1$; one -2 sphere $u_2$; -2 and -3 spheres $u_3$ and $u_4$; and lastly two -2 spheres $u_5$ and $u_6$ (see Figure~\ref{Q} below (also Figure 4 in \cite{KS})). 

\begin{figure}[ht]
\scalebox{0.80}{\includegraphics{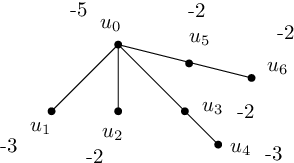}}
\caption{The configuration $\mathcal{Q}$}
\label{Q}
\end{figure}

The intersection form $[\mathcal{Q}]$ for $H_2(\mathcal{Q},\Z)$ is given by a $7 \times 7$ matrix 
\[
\begin{bmatrix}
    -5 &1&1&1&0&1&0\\       
    1&-3&0&0&0&0&0\\
    1&0&-2&0&0&0&0\\
    1&0&0&-2&1&0&0\\
    0&0&0&1&-3&0&0\\
    1&0&0&0&0&-2&1\\
    0&0&0&0&0&1&-2\\
\end{bmatrix}
\]

and its inverse $[\mathcal{Q}]^{-1}$ is

\[-1/261
\begin{bmatrix}
    90 &30&45&54&18&60&30\\       
    30&97&15&18&6&20&10\\
    45&15&153&27&9&30&15\\
    54&18&27&189&63&36&18\\
    18&6&9&63&108&12&6\\
    60&20&30&36&12&214&107\\
    30&10&15&18&6&107&184\\
\end{bmatrix}
\]
The signature $\sigma (\mathcal{Q})$ of $\mathcal{Q}$ is -7. On the other hand, $\mathcal{R}$ is a particular simply connected, symplectic 4-manifold with Euler characteristic 3, signature $-2$, and with convex boundary. In addition the intersection form for $H_2(\mathcal{R},\Z)$ is given by the $2\times 2$ negative definite matrix (Lemma 3.8 in \cite{KS}):
\[\begin{bmatrix}
-10&-23\\
-23&-79\\
\end{bmatrix}\]

We refer the reader to \cite{KS} for the precise Kirby calculus and Lefschetz fibration description of $\mathcal{R}$. Now, let $ \xi_{can}$ denote the canonical contact structure on the boundary $\partial \mathcal{Q}$ of $\mathcal{Q}$. The boundary of $ \mathcal{R}$ with the induced contact structure is contactomorphic to $(\partial \mathcal{Q}, \xi_{can})$ (Proposition 2.6 in \cite{KS}), and $(\mathcal{Q,R})$ surgery is defined as follows.

\begin{definition}
Replacing the neighborhood of $\mathcal{Q}$ in a symplectic 4-manifold by the filling $\mathcal{R}$ is called the $(\mathcal{Q,R})$ surgery.
\label{QR}
\end{definition}

Next, let us recapitulate the $(\mathcal{K,L})$-star surgery from \cite{KS}. $\mathcal{K}$ is the configuration of symplectic spheres which intersect according to a star shaped graph with 4 arms. Each arm contains one -2 sphere $u_i, i=1,\cdots 4$ and the central vertex $u_0$ is a -6 sphere (see Figure~\ref{K}, also figure 6 in \cite{KS}). 

\begin{figure}[ht]
\scalebox{0.80}{\includegraphics{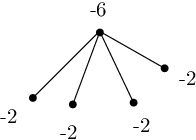}}
\caption{The configuration $\mathcal{K}$}
\label{K}
\end{figure}

The intersection form $[\mathcal{K}]$ for $H_2(\mathcal{K},\Z)$ is given by a $5 \times 5$ matrix 
\[
\begin{bmatrix}
    -6 &1&1&1&1\\       
    1&-2&0&0&0\\
    1&0&-2&0&0\\
    1&0&0&-2&0\\
    1&0&0&0&-2\\
\end{bmatrix}
\]
and its inverse is
\[-1/16
\begin{bmatrix}
     4 &2&2&2&2\\
     2&9&1&1&1\\
     2&1&9&1&1\\
     2&1&1&9&1\\
     2&1&1&1&9    
\end{bmatrix}
\]
The signature $\sigma (\mathcal{K})$ of $\mathcal{K}$ is -5. On the other hand, $\mathcal{L}$ is a particular symplectic 4-manifold with Euler characteristic 2, $c_1(\mathcal{L}) =0$, $\pi_1(\mathcal{L}) = \Z /4$, $H_2(\mathcal{L}) = \Z$ and intersection form is the matrix $[-4]$, hence $\sigma(\mathcal{L}) = -1$. (See \cite{KS} for the precise Kirby calculus and Lefschetz fibration description of $\mathcal{L}$). It is shown that the plumbing $\mathcal{K}$ can be replaced by the symplectic filling $\mathcal{L}$ and we have

\begin{definition}
Replacing the neighborhood of $\mathcal{K}$ in a symplectic 4-manifold by the filling $\mathcal{L}$ is called the $(\mathcal{K,L})$ surgery.
\label{KL}
\end{definition}
In \cite{Star2} it was shown that the $(\mathcal{K,L})$-surgery is not equivalent to any sequences of generalized symplectic rational blow-downs.

Now we will recap the $(\mathcal{S}_2,\mathcal{T}_2)$ surgery of \cite{KS}. Here $\mathcal{S}_2$ is the configuration of symplectic spheres which intersect according to a star shaped graph with four arms. Each arm contains one -2 sphere $u_i, i=1,\cdots 4$ and the central vertex $u_0$ is a -5 sphere (see Figure \ref{S2}, also figure 2 in \cite{KS}).

\begin{figure}[ht]
\scalebox{0.80}{\includegraphics{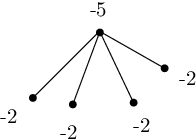}}
\caption{The configuration $\mathcal{S}_2$}
\label{S2}
\end{figure}

The intersection form $[\mathcal{S}_2]$ for $H_2(\mathcal{S}_2,\Z)$ is given by a $5 \times 5$ matrix 
\[
\begin{bmatrix}
    -5 &1&1&1&1\\       
    1&-2&0&0&0\\
    1&0&-2&0&0\\
    1&0&0&-2&0\\
    1&0&0&0&-2\\
\end{bmatrix}
\]
and its inverse is
\[-1/12
\begin{bmatrix}
     4 &2&2&2&2\\
     2&7&1&1&1\\
     2&1&7&1&1\\
     2&1&1&7&1\\
     2&1&1&1&7    
\end{bmatrix}
\]
The signature $\sigma (\mathcal{S}_2)$ of $\mathcal{S}_2$ is -5. On the other hand, $\mathcal{T}_2$ is a particular symplectic 4-manifold with Euler characteristic 3, $\pi_1(\mathcal{T}_2) = \Z /2$, $\sigma(\mathcal{T}_2) = -2$. The intersection form of $\mathcal{T}_2$ is (\cite{KS}, Proposition 3.2)
\[
\begin{bmatrix}
-4 & 0\\
0 & -3
\end{bmatrix}
\]

We again refer the reader to \cite{KS} for the precise Kirby calculus and Lefschetz fibration description of $\mathcal{T}_2$. It is shown that $\mathcal{S}_2$ can be replaced by the symplectic filling $\mathcal{T}_2$ (\cite{KS}). Hence,

\begin{definition}
Replacing the neighborhood of $\mathcal{S}_2$ in a symplectic 4-manifold by the filling $\mathcal{T}_2$ is called the $(\mathcal{S}_2,\mathcal{T}_2)$ surgery.
\label{ST}
\end{definition}

Now let us revisit the $(\mathcal{U,V})$ surgery from \cite{KS} where $\mathcal{U}$ is the configuration of symplectic spheres as in Figure~\ref{U} and $e(\mathcal{U}) = 10$, $\sigma(\mathcal{U}) = -9$. From the intersection form of $\mathcal{U}$, we find its inverse as in Figure \ref{UUinv}. %where we have $[\mathcal{U}]^{-1}$ on the left, the resulting matrix on the right.

\begin{figure}[ht]
\scalebox{0.80}{\includegraphics{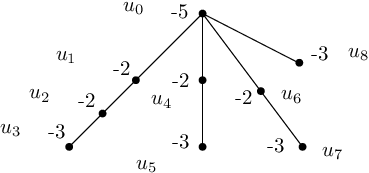}}
\caption{The configuration $\mathcal{U}$}
\label{U}
\end{figure}
\begin{figure}[ht]
\scalebox{0.45}{\includegraphics{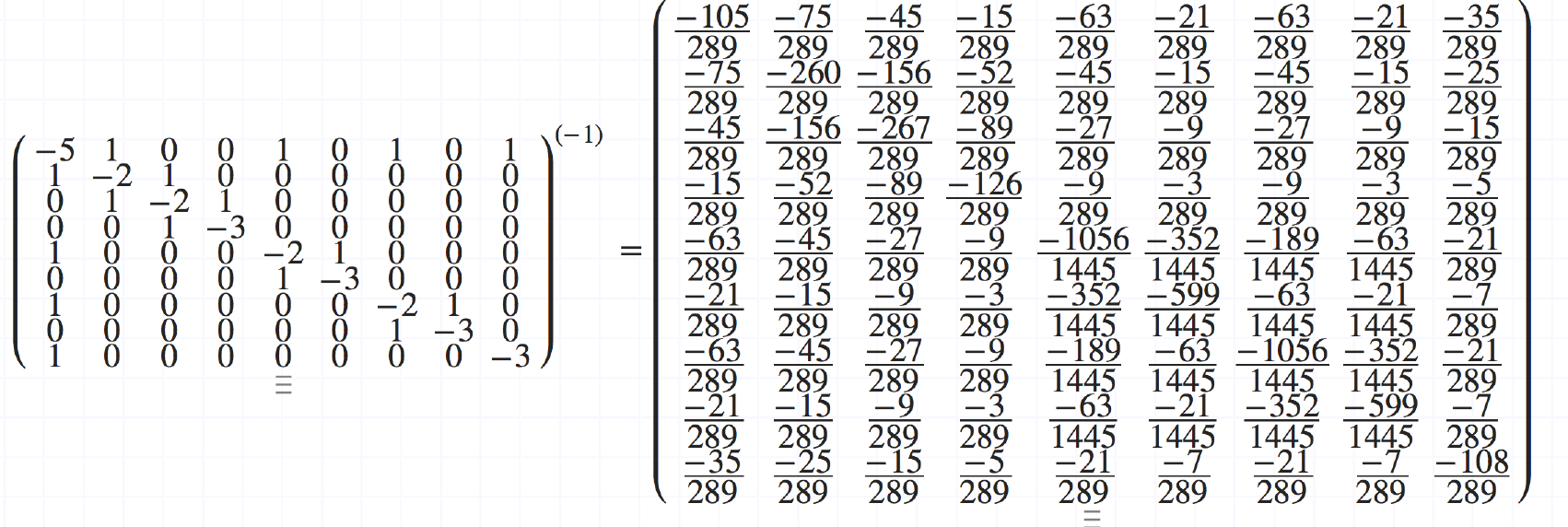}}
\caption{Intersection form $[\mathcal{U}]^{-1}$}
\label{UUinv}
\end{figure}

On the other hand, the symplectic filling $\mathcal{V}$ is a particular symplectic 4-manifold with $e(\mathcal{V})=3$, $\sigma(\mathcal{V}) = -2$. The intersection form of $\mathcal{V}$ is
\[
\begin{bmatrix}
-30 & 5\\
5 & -49
\end{bmatrix}
\]
The precise Kirby calculus and Lefschetz fibration description of $\mathcal{V}$ is given in \cite{KS}.

\begin{definition}
The $(\mathcal{U,V})$ surgery is symplectically replacing the neighborhood of $\mathcal{U}$ in a symplectic 4-manifold by the filling $\mathcal{V}$. 
\label{UV}
\end{definition}

Lastly, let us give the following lemmas which are in fact valid for, not only these above mentioned four types-, but all types of star surgeries of \cite{KS}. We will use them to show that our manifolds constructed in the later sections are exotic and minimal respectively:

%We first recall these well-known facts. For a closed, smooth, simply connected 4-manifold $M$, $e(M) = b_2^+(M) + b_2^-(M)+2$ and $\sigma(M) = b_2^+(M) -  b_2^-(M)$. If $\sigma(M)$ is not divisible by 8, then  the intersection form $Q_M$ of $M$ is odd, if, in addition, both $ b_2^+(M) \neq 0$ and $ b_2^-(M) \neq 0$, then $Q_M$ is indefinite.
\begin{lemma}
Let $M$ be a manifold obtained from a K\"{a}hler manifold from a star surgery operation, where $M$ is simply connected, $b_2^+(M) > 1$, and the intersection form of $M$ is odd and indefinite. Then $M$ is an exotic copy of $n\CP \# m\CPb$, where $n = b_2^+(M)$ and $m = b_2^-(M)$.
\label{exo}
\end{lemma}
%and $\sigma(M) = b_2^+(M) -  b_2^-(M)$ is not divisible by 8 (thus  the intersection form $Q_M$ of $M$ is odd) and $Q_M$ is indefinite.
\begin{proof}
By Freedman’s classification theorem of simply connected, closed topological 4-manifolds \cite{Freed} $M$ is homeomorphic to $n\CP \# m\CPb$. Star surgeries are symplectic operations \cite{KS} so the resulting manifold $M$ is symplectic. By Taubes' theorem \cite{Tau} on symplectic manifolds, $M$ has nonvanishing Seiberg-Witten invariant. However, by the vanishing theorem for connected sums of manifolds with $b_2^+ > 1$ \cite{GS}, the Seiberg-Witten invariants of $n\CP \# m\CPb$ are all zero. Thus we conclude that $M$ is an exotic copy of $n\CP \# m\CPb$.
\end{proof} 

We need the following lemma to prove that the manifolds we construct in sections 6.1 and 6.2 are minimal. (We would like to thank \c{C}. Karakurt for communicating the proof to us.) %Let us introduce some notations first. 
\begin{lemma}
Let $\mathcal{A}$ be a negative definite plumbing, $[\mathcal{A}]$ be its intersection matrix, and $M,N$ be two characteristic elements in $H^2(\mathcal{A}, \Z)$. Then $M|_{\partial \mathcal{A}} = N|_{\partial \mathcal{A}}$ if and only if the entries of the vector $\frac{1}{2}[\mathcal{A}]^{-1}(M-N)$ are integers.
%Let $(\mathcal{A,B})$ be one of the star surgeries introduced in \cite{KS}, and let $W$ be a closed, symplectic 4-manifold containing the plumbing $\mathcal{A}$, and $X= (W\setminus \mathcal{A})\cup \mathcal{B}$ be the star surgered manifold. Assume that $J$ is a SW basic class on $W$ and $\widetilde M$ is an extension of $J$ to $X$ as a SW basic class. Let us call $J|_{\mathcal{A}} :=M$. Suppose furthermore that we have\begin{equation*}\widetilde M|_{\mathcal{B}} = \pm B
%\widetilde M|_{\mathcal{B}} = \pm (a_1v_1 + \cdots + a_nv_n) =: \pm K\end{equation*}
%where $v_i$ are the basis elements of $H^2(\mathcal{B},\Z)$ and $a_i \in \Z$.where $B$ is a basis element of $H^2(\mathcal{B},\Z)$. If the entries of the vector $[\mathcal{A}]^{-1}M$ are integers, where $[\mathcal{A}]$ is the intersection matrix of $\mathcal{A}$, then $B|_{\partial\mathcal{B}}=-B|_{\partial\mathcal{B}}$. Otherwise, the equality does not hold.
\label{IMP}
\end{lemma}

\begin{proof}
Let us denote the set of characteristic elements in $H^2(\mathcal{A}, \Z)$ by $Char$. Then the set of $spin^c$ structures on $\mathcal{A}$ is identified with $Char$. If $K \in Char$ and $e \in H_2(\mathcal{A},\Z)$, then $K+2PDe \in Char$, and $K$ and $K+2PDe$ restrict to the same class on $H^2(\partial \mathcal{A},\Z)$. Hence, the restriction of the map: $ H^2(\mathcal{A},\Z) \rightarrow H^2(\partial \mathcal{A},\Z)$ to the $spin^c$ structures is identified with the natural map: $Char \rightarrow Char/2PD(H_2(\mathcal{A}),\Z)$. This gives the isomorphism: $H^2(\partial \mathcal{A},\Z) \simeq Char/2PD(H_2(\mathcal{A}),\Z)$. Hence to check whether $M|_{\partial\mathcal{A}}=N|_{\partial\mathcal{A}}$, we need to check whether $M$ and $N$ are in the same $2PD(H_2(\mathcal{A},\Z))$ orbit. That is to say, we check whether $M -N$ is in the image of $2[\mathcal{A}]$, i.e., whether the entries of the vector $\frac{1}{2}[\mathcal{A}]^{-1}(M-N)$ are integers.
%Now, the restriction map is a homomorphism and we have that the restrictions of $B$ and $-B$ to $\partial \mathcal{B}$ are the same as the restrictions of $M$ and $-M$ to $\partial \mathcal{A}$, where $M$ is the canonical class on the plumbing $\mathcal{A}$. Hence to check whether $B|_{\partial\mathcal{B}}=-B|_{\partial\mathcal{B}}$, we need to check whether $M$ and $-M$ are in the same $2PD(H_2(\mathcal{A},\Z))$ orbit. That is to say, we check whether $M - (-M)=2M$ is in the image of $2[\mathcal{A}]$, i.e., whether the entries of the vector $[\mathcal{A}]^{-1}M$ are integers.
\end{proof} 

%Note that if $K|_{\partial\mathcal{B}}\neq -K|_{\partial\mathcal{B}}$, then exactly one of $\pm K|_{\partial\mathcal{B}} = M|_{\partial\mathcal{A}}$

\section{Constructions of configurations of $I_n$ singularities in the rational elliptic surfaces}
\label{2nd}

Let us first recall the following facts from \cite{MirPers}. A (Jacobian) rational elliptic surface is the complex projective plane blown up at nine points, $\CP \# 9\CPb$, which admits an elliptic fibration over $\mathbb{CP}^1$ (with a section).  A cubic pencil in $\CP$ is a one-dimensional linear system of cubics, %having no fixed components. Thus, a cubic pencil has only isolated basepoints, and by Bezout's theorem those are always nine in number. 
which has nine basepoints, counted with multiplicity, by Bezout's theorem.
%They are not necessarily all distinct on IP2 but may be infinitely near.
By blowing up the basepoints of a cubic pencil we obtain a rational elliptic surface. The exceptional divisors of square -1 correspond exactly to the sections. Moreover,

\begin{prop} (\cite{MirPers})
Every Jacobian rational elliptic surface is the blow up of the basepoints of a cubic pencil.
\end{prop}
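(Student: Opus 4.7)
The plan is to reverse-engineer the blow-up structure from the elliptic fibration. A Jacobian rational elliptic surface $\pi\colon S \to \mathbb{CP}^1$ is a rational surface with $b_2(S)=10$, so it admits a birational morphism $\phi\colon S \to \CP$ realizing $S$ as a ninefold blow-up at (possibly infinitely near) centers $p_1,\dots,p_9$. The target is to show that $\phi$ pushes forward the pencil of fibers to a cubic pencil on $\CP$ whose base locus is exactly $\{p_1,\dots,p_9\}$.

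The key input is Kodaira's canonical bundle formula applied to $\pi$. Since $\chi(\mathcal{O}_S)=1$, $\deg K_{\mathbb{CP}^1}=-2$, and the existence of a section rules out multiple fibers (a multiple fiber $mF_0$ forces $\sigma\cdot mF_0 = m(\sigma\cdot F_0)\ge m$, incompatible with $\sigma\cdot F = 1$), the formula yields $K_S \sim -F$, where $F$ is a general fiber. Hence the pencil of fibers is precisely the anticanonical linear system $|-K_S|$, which is base-point free on $S$ with self-intersection $0$. Pushing forward under $\phi$, one computes $\phi_*(-K_S) = -K_{\CP} = 3H$, so the fibers map to plane cubics and $|-K_S|$ pushes forward to a pencil $\mathcal{P}$ of cubics on $\CP$. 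To identify its base locus, each exceptional curve $E_i$ extracted at the $i$-th step of $\phi$ satisfies $E_i \cdot (-K_S) = 1$ by adjunction, so every fiber meets $E_i$ transversally in a single point; consequently every cubic in $\mathcal{P}$ passes through $\phi(E_i)=p_i$. Two general cubics meet in $9$ points by B\'ezout, so the nine $p_i$ (with infinitely near multiplicities accounted for whenever some $p_i$ lies on a previous exceptional divisor) exhaust the base scheme of $\mathcal{P}$. Blowing up this length-$9$ subscheme of $\CP$ resolves $\mathcal{P}$ into a base-point-free pencil whose total space is $S$ with its original fibration.

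The main obstacle is the bookkeeping hidden in two places. First, one must arrange that $\phi$ terminates at $\CP$ rather than at a Hirzebruch surface $\mathbb{F}_n$; this is done by exploiting the section, which is itself a $(-1)$-curve on $S$, together with components of reducible singular fibers, to exhibit a chain of nine contractions reducing $b_2$ from $10$ to $1$. Second, when the centers $p_i$ are not all distinct points of $\CP$ in general position, ``base locus'' must be interpreted scheme-theoretically, and one must verify that blowing up this subscheme recovers $S$ exactly and not merely a birational model; this can be checked inductively by peeling off the outermost exceptional curve and descending the cubic pencil one blow-up at a time. Both steps are routine once the canonical identity $-K_S \sim F$ is in place, which is the real content of the proposition.
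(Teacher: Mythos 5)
The paper offers no proof of this proposition: it is imported verbatim from Miranda--Persson with a citation, and the surrounding text only uses the statement (together with Naruki's explicit pencils) as a black box. So there is nothing in the source to compare your argument against line by line; what you have written is the standard proof from the literature, and it is essentially correct. The two real inputs are exactly where you place them: the canonical bundle formula (plus the observation that a section forbids multiple fibers) gives $K_S\sim -F$, after which the fibration is the anticanonical pencil and pushes forward to a pencil of cubics whose base scheme is the length-nine center of the blow-up. Adjunction giving $E_i\cdot(-K_S)=1$ and B\'ezout then pin down the base locus as you say. The one step you assert rather than prove is the claim that the nine contractions can be arranged to terminate at $\CP$ rather than at a Hirzebruch surface $\mathbb{F}_n$ with $n\geq 2$; this is the only genuinely non-routine point in the whole argument (the section is a $(-1)$-curve since $\sigma^2=-\chi(\mathcal{O}_S)=-1$, but after contracting it one must still check that a suitable $(-1)$-curve exists at each subsequent stage and that the process can be steered to $\CP$, e.g.\ using fiber components or an elementary transformation if one lands on $\mathbb{F}_0$ or $\mathbb{F}_2$). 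Since you explicitly flag this and the infinitely-near-point bookkeeping as the remaining obstacles, I would count the proposal as a correct and honest sketch that supplies more than the paper itself does.
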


When we drop the assumption of being Jacobian, a rational elliptic surface is still a blow up of $\CP$ at nine points, though blow-ups are not necessarily at the basepoints of a cubic pencil \cite{MirPers}.  In \cite{Perss} the complete list of singular fibers in the global elliptic fibrations on $\CP \# 9\CPb$ is given, where Persson notes that the configurations other than type $I_1$ and $II$ are obtained by blowing up cubic pencils, where $I_1$ and $II$ denote the fishtail and cusp fibers, respectively. In \cite{Naru}, Naruki explicitly constructs pencils of cubic curves, from which he \emph{states} that $I_n$ fibers, with $n\geq 2$, are obtained in the rational elliptic surfaces. In addition, in \cite{Kuru}, Naruki's work is generalized; more cubic pencils are shown to exist. They also list some $I_n$ configurations with $n\geq 2$, but it is not shown how to obtain these configurations from the given pencils.

In this section, by starting with the pencils in \cite{Naru} and  \cite{Kuru}, we will explicitly construct each of the following configurations in $\CP \# 9\CPb$ that will be used in the later sections of the paper:
\begin{eqnarray}
\label{config}
(I_6,I_3,I_2) \nonumber\\
(I_5,I_4) \nonumber\\
(I_5, I_5) \nonumber
\end{eqnarray}
where the notation $(I_6,I_3,I_2)$ means there is one singular fiber of type $I_6$, one of type $I_3$, and one of type $I_2$. We will find the homology classes of the sphere components of each fiber, verify that their self intersections are -2 and precisely find the -1 sections. Finding homology classes also enables us to do computations (for instance in finding the symplectic Kodaira dimension of the resulting manifolds). In this way, we make these configuration more accessible to work with. Let us note that the $(I_5, I_5)$ configuration is also constructed explicitly in [\cite{KS}, Lemma 4.3]. We also note that we are not listing all of the singular fibers in these cases, we do not consider the global fibrations. We will construct configurations without $I_1$ and $II$ fibers. In fact, let us consider the $(I_6,I_3,I_2)$ configuration. 
%which was constructed in $\CP \# 9\CPb$ (Section 2.9 in \cite{Naru}). Note that in the notation of \cite{Naru}, $\tilde A_k$ corresponds to $I_{k+1}$ fiber (p.318).  Let us also note that 
The Euler characteristics of the $I_k$ fibers are $e(I_k) = k, \;\;k \geq 2$ and from the simple Euler characteristic computation we see that inside $\CP \# 9\CPb$ there has to be an additional fiber $I_1$. In \cite{Naru}, Naruki shows the existence of $I_1$ fibers via Cremona transformations which correspond to holomorphic automorphisms of the corresponding variety. In our work, we build all singular fibers from cubic pencils and we do not consider additional fishtails and cusps.
\subsection{Construction of the $(I_6,I_3,I_2)$ configuration in $E(1)$}
Let us construct the $(I_6,I_3,I_2)$ configuration in $E(1)$. We will construct this configuration from the pencil $P$ given in \cite{Naru}, Section 2.9. Let us first present $P$ here, from \cite{Naru}, p.332. In $\CP$, Naruki takes a nodal cubic $C$ and a conic $Q$ which intersect only at the node $p$ of $C$ with multiplicity 6. $Q$ touches one of the two branches of $C$ at $p$ with multiplicity 5. Next, he takes an inflection point $q$ of $C$ and the corresponding inflection line $L$. In \cite{Naru}, $L, Q, C$ are given as follows:
\begin{eqnarray*}
L&:& z=0\\
Q&: &x^2+xy+xz+y^2 = 0 \\
C&: &-x^2z+xyz+y^3 = 0
\end{eqnarray*}
He considers the pencil $P$ generated by the two cubic curves $L \cup Q$ and $C$ with base points $p=(0, 0, 1)$ and $q=(1, 0, 0)$. Then, he gives a member $C_1 $ of $P$, which has a node at $q$ and passing through $p$, as follows (\cite{Naru}, p.332):
\begin{equation*}
C_1 : z(x^2+xy+xz+y^2)+(-x^2z+xyz+y^3).
\end{equation*}
The cubic $C_1$ intersects both $L$ and $C$ at the point $q$ with multiplicities are both 3. Let us denote the intersection multiplicities as $(L,C_1)= (C,C_1)=3$. Here one of the branches of the node of $C_1$ at $q$ is tangent to order 2 to $C$ and $L$ (simple tangency), and the other branch intersects both $C$ and $L$ once. We also have $(C,L)=3$ at $q$ and note that $C$ does not have a node at $q$. On the other hand, at the point $p$ of the pencil $P$, it is given that $(C,Q)=(C,C_1)=6$, where one of the branches of the node of $C$ at $p$ is tangent to order 5 to $Q$ and $C_1$ (\cite{Naru}, p.332). Also, $(Q,C_1)=6$ at the point $p$. Lastly, as it is seen from the above equations, $Q$ and $L$ intersect at two distinct points which are different than $p$ and $q$. (cf. \cite{Naru}, Section 2.9, see also the discussion on p.323-324). 

We sketch Naruki's pencil $P$ as in Figure \ref{Pencil} below where we denote $C$ in black, $C_1$ in blue and $Q, L$ in green, and the total intersection multiplicities at the points $q$ and $p$ by $\times 3$ and $\times 6$, respectively.

%$C$ and $Q$ intersect only at the node $p$ of $C$ with multiplicity 6, and $C$ and $L$ intersect at a point $q$. Here $q$ is the inflexion point and $L$ is the corresponding inflexion line as we show in Figure  \ref{Pencil}. Hence the base points of the pencil are $p$ and $q$. Then he takes a member $C_1$ of the pencil $P$, which has a node at $q$ (and of course passes through the other basepoint $p$). The intersection multiplicity of $C_1$ and $L$ is 3. See Figure \ref{Pencil} for the sketch of this pencil.

\begin{figure}[ht]
\scalebox{1.3}{\includegraphics{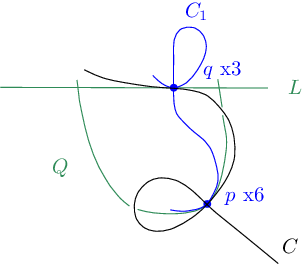}}
\caption{Pencil of cubic curves}
\label{Pencil}
\end{figure}

In \cite{Naru}, it is stated that by blowing up $\CP$ three times over $q$ and six times over $p$, one can obtain $I_3$ and $I_6$ fibers, respectively, in $\CP \# 9 \CPb$. Moreover, it is claimed that the strict transform of $Q\cup L$ gives the $I_2$ fiber, and as a result of blow-ups, two of the $-1$ sections are obtained (\cite{Naru}, p.332). 

Now we will verify these claims by explicitly constructing $I_6, I_3, I_2$ fibers and the two $-1$ sections. We proceed as in \cite{SS1}, Section 5 or \cite{KS}, Section 4. Namely, we start with the pencil $P$, blow-up the base points $p$ and $q$, and keep track of the intersection multiplicities of the intersection points after each blow-up. Moreover, initially the total homology classes of each of the black, blue and green parts, $C$, $C_1$ and $Q\cup L$ (see Figure \ref{Pencil}), of the pencil are $3h$. After each blow up, we compute the total homology classes of the proper transforms of these three parts in different colors. To equate their total homology classes, we include the exceptional spheres coming from the blow-ups into one of these three parts. This determines the new points to be blown-up. We continue until all intersections are resolved and the total homology classes of the three different parts become equal. Let us explain this process in more detail. 

We start with the pencil $P$ which is depicted in Figure \ref{Pencil} and we blow up the points $q$ and $p$. We obtain the exceptional spheres $e_1$ and $e_2$, respectively, and the nodes of the cubics $C_1$ and $C$ are resolved as we show in the first configuration of Figure \ref{632}. After resolving the node of $C_1$ at $q$ we obtain $(\tilde L, \tilde C_1)= (\tilde C, \tilde C_1)=1$, and at the same point we have $(\tilde C, \tilde L)=2$, where we denote the proper transforms of the curves $L,C,C_1$ by $\tilde L, \tilde C, \tilde C_1$. Moreover, after blowing up the double point of $C$ at $p$ we have $(\tilde C, \tilde Q) = (\tilde C, \tilde C_1) = 4$, and $(\tilde Q, \tilde C_1)=5$. Hence we obtain the following homology classes after the blow-ups (as we also show in the first configuration of Figure \ref{632}):
\begin{eqnarray*}
\tilde C_1 &=& 3h-2e_1-e_2\\
\tilde L &=& h-e_1, \; \tilde Q = 2h-e_2\\
\tilde C &=& 3h - e_1-2e_2
\end{eqnarray*}
Initially in the pencil $P$ the homology classes of $C_1$, $L\cup Q$ and $C$ are all $3h$. After blowing-up at $q$ and $p$ we need to equate the total homology classes of these three parts, $\tilde C_1, \tilde L \cup \tilde Q$, and $\tilde C$ of the configuration that are shown in blue, green and black, respectively. From the above equations, the total homology class of $\tilde L \cup \tilde Q$ is $3h-e_1-e_2$. Therefore, we add $e_1$ to $\tilde C_1$ (to the blue part), and $e_2$ to $\tilde C$ (to the black part). Next we blow up at the two points indicated in black in the first configuration where the intersection multiplicities are also shown. We obtain the second configuration in Figure \ref{632} where $e_3$ and $e_4$ are the exceptional spheres. We keep track of the intersection multiplicities and homology classes as we show at each step in the figure. By abuse of notation, after each blow-up we show the proper transform of each curve by the same notation but we write down the homology classes of these curves. 
%We proceed in this way. At each step we keep track of the intersection multiplicities and the homology classes of the blue, green and black parts, as we show in each step of Figure \ref{632}. Accordingly, we do the blow-ups at the intersection points which are indicated as black points in five of the six  configurations in Figure \ref{632}. By abuse of notation, after each blow-up we show the proper transform of each curve by the same notation but we write down the homology classes of these curves. 

In the second configuration, to equate the total homology classes of the blue, green and black parts, we add $e_3$ to the blue part and $e_4$ to the black part. Each of the three total classes of the blue, green and black parts becomes $3h-e_1-e_2-e_3-e_4$. Next we blow-up the indicated black points in the second configuration of the figure, and we get the third configuration with $e_5, e_6$ the exceptional spheres. (The intersection multiplicities are also given in the figure). To equate the classes, we only add $e_6$ to the black part. At the $-1$ sphere $e_5$, the intersection of blue, green and black parts are separated. Each of the total classes of the three parts become $3h-e_1-e_2-e_3-e_4-e_5-e_6$ in the third configuration. Then we blow-up at the indicated black point of the third configuration to separate the blue, green and black curves. We get $e_7$ as the exceptional sphere as shown in the fourth configuration. Next we add $e_7$ to the black part. We compute the total classes of each of the three parts, each of them is equal to $3h-e_1-e_2-e_3-e_4-e_5-e_6-e_7$. We blow-up the black point in the fourth configuration, which gives the fifth one where $e_8$ is the exceptional sphere. We add $e_8$ to the black part to equate the total homology classes. Now each of the black, blue and green parts has $3h-e_1-e_2-e_3-e_4-e_5-e_6-e_7-e_8$ as their total class. We blow-up the intersection point indicated in black where we have $(\tilde Q, \tilde C_1)=1$ and it is the only remaining intersection. Hence we obtain the last configuration. 

In the last step of Figure \ref{632}, we compute the total homology classes of the blue, black and green parts again, which are all equal to $3h-e_1 - e_2 \cdots - e_9$ (see the last step):
\begin{eqnarray*}
3h-e_1 - e_2 \cdots - e_9 &=& \tilde C_1 +(e_1-e_3) + (e_3-e_5)\\ 
&=& \tilde C + (e_2-e_4)+(e_4-e_6)+(e_6-e_7)+(e_7-e_8)+(e_8-e_9)\\
 &=& \tilde L + \tilde Q
\end{eqnarray*}
Hence the blue, black and green parts are completely separated, there are no more points to be blown-up, and the process stops here. We have $\tilde C_1=3h-2e_1 -e_2 -e_3 -e_4 -e_6 -e_7 -e_8 -e_9$, $(e_1-e_3)$ and $(e_3-e_5)$ that are all $-2$ spheres and they intersect each other as shown in the figure. So they give the $I_3$ fiber. In addition, we obtain the following six $-2$ spheres $\tilde C = 3h-e_1 -2e_2 -e_3 -e_4 -e_5 -e_6 -e_7 -e_8$, $(e_2-e_4),(e_4-e_6),(e_6-e_7),(e_7-e_8),(e_8-e_9)$. They give the $I_6$ fiber. Lastly, the strict transforms of the conic $Q$ and the line $L$ are $\tilde Q =2h-e_2-e_4-e_6-e_7-e_8-e_9$ and $\tilde L= h - e_1- e_3- e_5$. So, they are both $-2$ spheres and they intersect at two distinct points, thus they give the $I_2$ fiber (see the last configuration in Figure \ref{632}). Hence we obtain the $(I_6,I_3,I_2)$ configuration in $\CP \# 9 \CPb = E(1)$. We have also obtained the two $-1$ sections, as claimed by Naruki, which are $e_5$ and $e_9$ as shown in the last step of Figure \ref{632}.

 %Lastly the $I_1$ fiber comes from another cubic passing through the points $p,q$ (\cite{Naru}), but we will not use it in our constructions. 

\begin{figure}
\scalebox{0.60}{\includegraphics{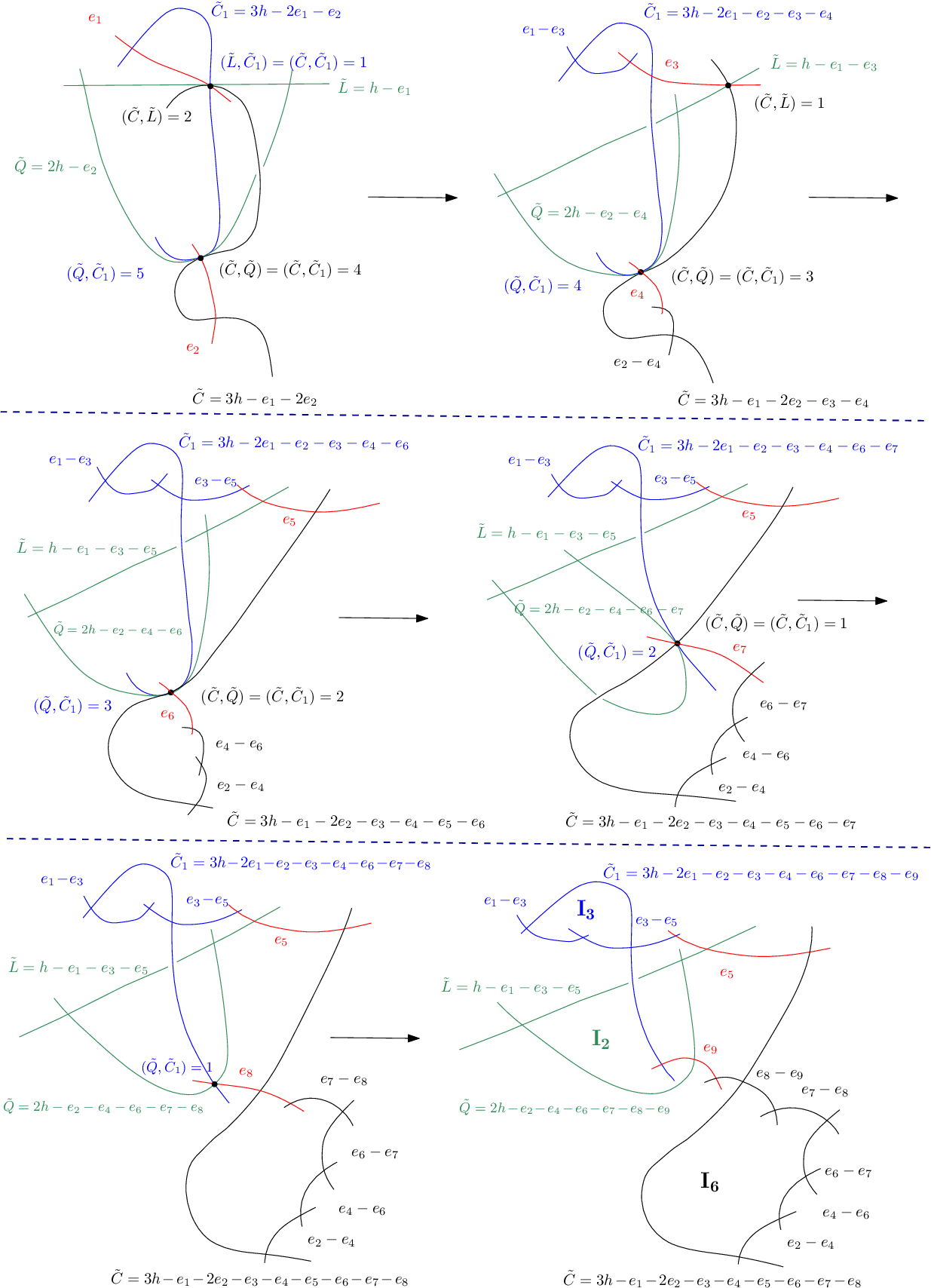}}
\caption{Construction of the $(I_6,I_3,I_2)$ configuration in $E(1)$}
\label{632}
\end{figure}

%(due to Cremona tr.s, they may not be symplectic. But if such tr.s preserve the c1, then at the end, the mfd would still be Kahler, thus symplectic(does Cremona tr.s preserve c1?)). 

\subsection{Construction of the $(I_5,I_4)$ configuration in $E(1)$}
In this section we construct the $(I_5,I_4)$ configuration in $E(1)$ by starting with the pencil given in \cite{Kuru}, p.22, No.55, ii). The equations of this pencil are given as follows (\cite{Kuru}, p.16, ii)):
\begin{eqnarray*}
L&:& x+y=0\\
Q&: &z^2+xy= 0 \\
S &:& x=0\\
A&=& (0,1,0), B= (1,-1,1), C=(0,0,1), E=(1,-1,-1)\\
P&=& (-a^2:1:a), a \neq 0
\end{eqnarray*}
Here $Q$ is the conic in $\CP$, and any conic in $\CP$ is isomorphic to $\mathbb{CP}^1$ \cite{Harts}. The line $L$ and the conic $Q$ intersect at the points $B$ and $E$. $S$ is the tangent line to $Q$ at the point $A$. Next, Kurumadani takes the lines passing through $A,B$ and $E,P$. Let us call these lines $K$ and $M$, respectively. The generators of this pencil are $Q \cup L$, and $S \cup K \cup M$. We sketch this pencil as in the first step of Figure \ref{54}, where $Q$ and $L$ are in blue, and $K,M,S$ are in black.
\begin{figure}[ht]
\scalebox{0.60}{\includegraphics{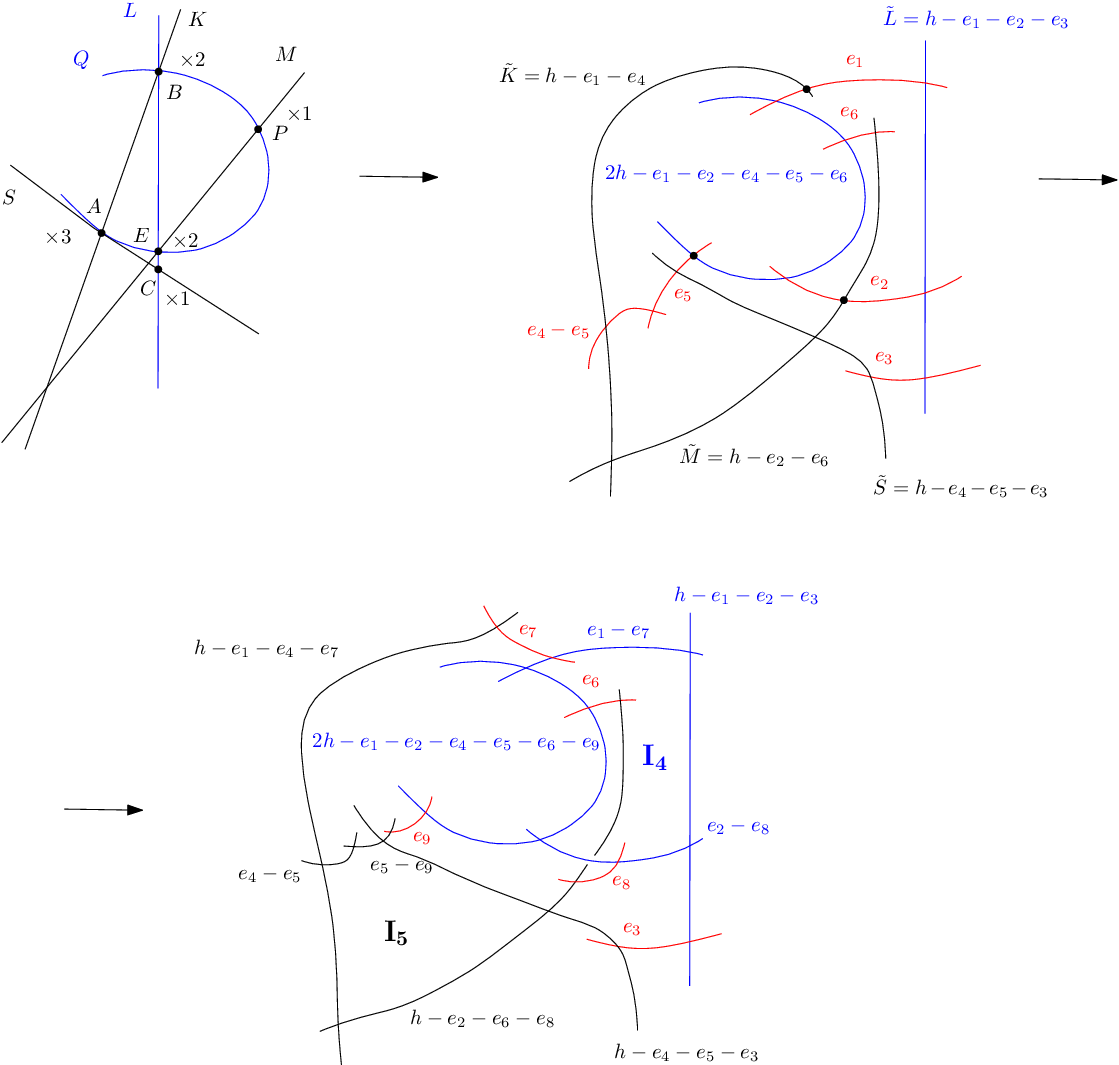}}
\caption{Construction of the $(I_5, I_4)$ configuration in $E(1)$}
\label{54}
\end{figure}
We blow-up the points $B,P,E,C,A$ and we denote the exceptional spheres $e_1, e_6, e_2, e_3, e_4$, respectively. When we blow-up the point $A$, the line $K$ is separated from $Q$ and $S$, and $K$ hits the $-1$ sphere $e_4$. But since $S$ and $Q$ have a tangency at $A$, they are not resolved after one blow-up. They intersect once where $e_4$ also passes through this intersection. So, we do one more blow-up and obtain the exceptional sphere $e_5$, and $e_4$ becomes $e_4-e_5$. Hence $\tilde K$ hits $e_4-e_5$ and $\tilde S$ hits $e_5$. See the second part of Figure \ref{54} where we also give the homology classes of each curve. Now we compute the total homology classes of the blue and black parts in the second step:
\begin{eqnarray*}
\tilde L + \tilde Q&=& (h-e_1-e_2-e_3) + (2h-e_1-e_2-e_4-e_5-e_6)\\ 
&=& 3h -2e_1 -2e_2 -e_3-e_4-e_5-e_6,\\
\tilde K+ \tilde M+ \tilde S&=& (h-e_1-e_4)+(h-e_2-e_6)+(h-e_4-e_5-e_3)\\ 
&=& 3h -e_1 -e_2 -e_3-2e_4-e_5-e_6
\end{eqnarray*}
To equate them, we add $e_1$ and $e_2$ to the blue part, and $e_4-e_5$ and $e_5$ to the black part. Now both of the two total homology classes become equal to $3h -e_1 -e_2 -e_3-e_4-e_5-e_6$. To separate the blue and black parts, we blow up the 3 black points on $e_1,e_2,e_5$ as shown in the second configuration of Figure \ref{54}. Thus we obtain the third configuration. In addition to $e_3$ and $e_6$ from previous step, we get the $-1$ spheres $e_7, e_8,e_9$ as sections. The four $-2$ spheres $(2h-e_1-e_2-e_4-e_5-e_6-e_9), (h-e_1-e_2-e_3), (e_1-e_7),(e_2-e_8)$ give the $I_4$ fiber, and the five $-2$ spheres $(h-e_1-e_4-e_7),  (h-e_2-e_6-e_8), (h-e_4-e_5-e_3), (e_4-e_5), (e_5-e_9)$ give the $I_5$ fiber as shown in the last part of Figure \ref{54}. Hence we acquire the $(I_5, I_4)$ configuration in $E(1)$.

\subsection{Construction of the $(I_5, I_5)$ configuration in $E(1)$}
Lastly, let us give the construction of the $(I_5, I_5)$ configuration in $E(1)$. We start with the pencil given in \cite{Naru}, Section 2.10. Let us present this pencil here. Naruki takes four points in general position in $\CP$:
$p_1=(1, -1,0), p_2 = (1,0,-1), q_1 = (0,1,0), q_2=(0,0,1)$. Then he takes two reducible cubics (each of them is a union of 3 lines): $\Delta_1 = \overline{p_1q_1} \cup \overline{p_2q_2} \cup \overline{p_1p_2}$ and $\Delta_2 = \overline{p_1q_2} \cup \overline{p_2q_1} \cup \overline{q_1q_2}$ as generators of the cubic pencil. He also gives the equations of the generators as follows (\cite{Naru}, Section 2.10, p.334):
\begin{eqnarray*}
\Delta_1 &=&yz(x+y+z)=0\\
\Delta_2 &=&x(x+y)(x+z)=0
\end{eqnarray*}
Moreover, it is given that $p_1, p_2, q_1,q_2$ and $r=(0, 1, -1)$ are the base points of this pencil. We sketch this line arrangement $\Delta_1 \cup \Delta_2$ as in the first part of Figure \ref{2I5}. By blow-ups we obtain two $I_5$ fibers in $E(1)$ as depicted in the second part of Figure  \ref{2I5}. However, we note that $\Delta_1 \cup \Delta_2$ is the same line arrangement as given in \cite{KS}, Figure 14, and in figures 15 and 16 they give the construction steps of two $I_5$ fibers (\cite{KS}, p.1614). Therefore, we skip the details here.

\begin{figure}
\scalebox{0.60}{\includegraphics{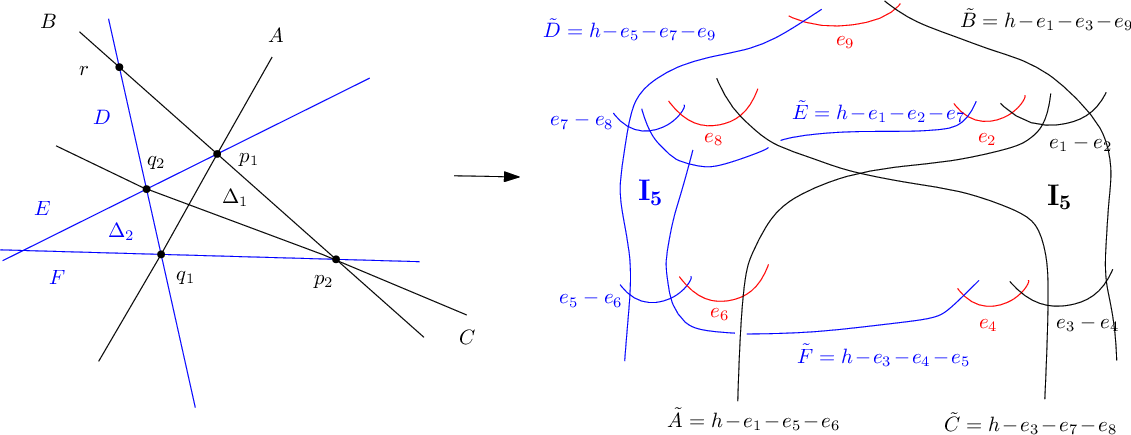}}
\caption{Construction of the $(I_5, I_5)$ configuration in $E(1)$}
\label{2I5}
\end{figure}
%Let us also note that in \cite{KS}, constructions of different configurations of singular fibers is also given.

\section{Constructions of the plumbings $\mathcal{Q}, \mathcal{K}, \mathcal{S}_2, \mathcal{U}$ from the $(I_6,I_3, I_2), (I_5, I_4)$ and $(I_5, I_5)$ configurations}
\label{plumbings}

In the following sections we will construct exotic 4-manifolds by the $(\mathcal{Q,R}), (\mathcal{K,L})$, $(\mathcal{S}_2,\mathcal{T}_2)$, $(\mathcal{U,V})$-star surgeries in some blow-ups of the manifolds $E(n)$'s. In this section we will construct the plumbings $\mathcal{Q}, \mathcal{K}, \mathcal{S}_2, \mathcal{U}$ from the $(I_6,I_3, I_2), (I_5, I_4)$ and $(I_5, I_5)$ configurations which we built in the previous section in $E(1)$. Now we consider these configurations in the manifolds $E(n)$. (Note that when we take the $n$-fold fiber sum of $E(1)$'s, we sew the $-1$ sections and obtain a $-n$ section of the resulting manifold $E(n)$. In addition, if in each copy of $E(1)$ we take the same type of configuration, after the $n$-fold fiber sum we obtain $n$ copies of that configuration inside $E(n)$). Let us give the following figures \ref{1}, \ref{2}, \ref{3}, where we label the $-2$ spheres of the fibers, the $-n$ section of $E(n)$ and some intersection points. In figures \ref{1}, and \ref{3} we take one copy each of $(I_6,I_3,I_2)$ and $(I_5,I_5)$ configurations, but in Figure \ref{2} we take two of the $n$ copies of the $(I_5,I_4)$ configuration in $E(n)$. By using these configurations we prove the following lemmas.

\begin{figure}[ht]
\scalebox{0.75}{\includegraphics{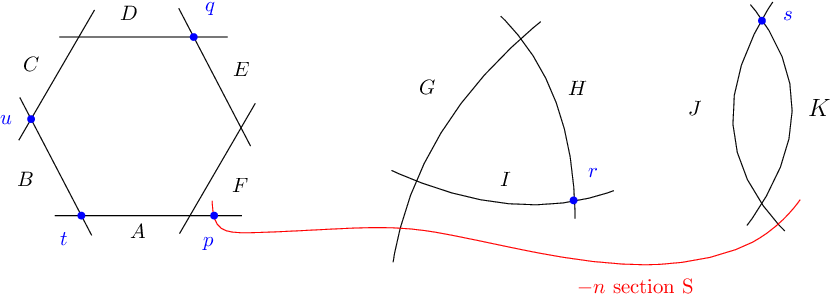}}
\caption{$(I_6,I_3,I_2)$ configuration in $E(n)$}
\label{1}
\end{figure}

\begin{figure}[ht]
\scalebox{0.70}{\includegraphics{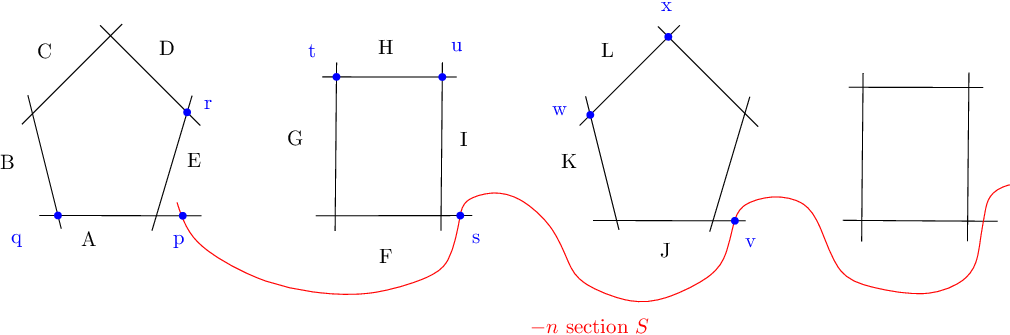}}
\caption{Two copies of $(I_5,I_4)$ configuration in $E(n)$}
\label{2}
\end{figure}

\begin{figure}[ht]
\scalebox{0.80}{\includegraphics{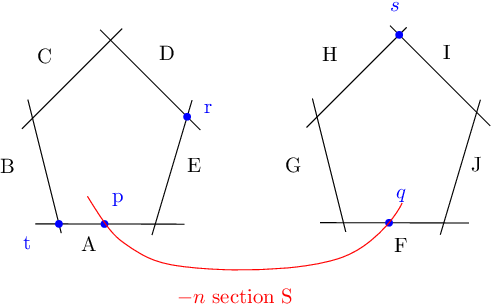}}
\caption{$(I_5,I_5)$ configuration in $E(n)$}
\label{3}
\end{figure}

\begin{lemma}
The plumbing $\mathcal{Q}$ (given in Figure \ref{Q}) symplectically embeds in $E(5)\#\CPb$.
\label{Qin}
\end{lemma}

\begin{proof}
We will prove this lemma in three ways. First we consider the $(I_6,I_3,I_2)$ configuration in $E(5)$ as in Figure \ref{1}, where $S$ is the $-5$ section. We do symplectic resolution at the points $p,t,u$. Thus the symplectic resolution $S+A+B+C$ of the spheres $S,A,B,C$ gives the $-5$ sphere which we take as the central vertex of the plumbing $\mathcal{Q}$. Then we blow-up the point $q$ which is the intersection point of the $-2$ spheres $D$ and $E$. Next, we take the following spheres as for the four arms of the plumbing $\mathcal{Q}$
\begin{eqnarray*}
\tilde D : (-3)\\
J: (-2)\\
F, \tilde E: (-2,-3)\\
G,H: (-2,-2)
\end{eqnarray*}
where we write down the ordered self intersections of the spheres in parentheses. For instance $F, \tilde E: (-2,-3)$ means that the self intersections of $F, \tilde E$ are $-2,-3$, respectively. Hence we obtain $\mathcal{Q}$ symplectically embedded in $E(5)\#\CPb$. Alternatively, let us take two copies of the $(I_5,I_4)$ configuration in $E(5)$ as in Figure \ref{2}, where $S$ is the $-5$ section. We symplectically resolve the intersection points $p,q$ as shown in the figure. The symplectic resolution $S+A+B$ gives the $-5$ sphere which we take as the central vertex of $\mathcal{Q}$. We blow-up the point $r$. Then we take the following spheres, as for the four arms of $\mathcal{Q}$
\begin{eqnarray*}
\tilde E : (-3)\\
J: (-2)\\
C, \tilde D : (-2, -3)\\
F,G: (-2,-2)
\end{eqnarray*}
This gives the plumbing $\mathcal{Q}$ symplectically embedded in $E(5)\#\CPb$. We can also build $\mathcal{Q}$ from the $(I_5,I_5)$ configuration in $E(5)$ shown in Figure \ref{3} with $S$ the $-5$ section. We symplectically resolve the points $p,t,q$. The symplectic resolution $S+A+B+F$ gives the central vertex of $\mathcal{Q}$. We blow-up the point $r$. For the arms we take 
\begin{eqnarray*}
\tilde E : (-3)\\
G: (-2)\\
C, \tilde D : (-2, -3)\\
J,I: (-2,-2)
\end{eqnarray*}
This gives yet another proof of the lemma.
\end{proof}

\begin{lemma}
The plumbing $\mathcal{K}$ (given in Figure \ref{K}) symplectically embeds in $E(6)$, and the plumbing $\mathcal{S}_2$ as in Figure \ref{S2} symplectically embeds in $E(5)$.
\label{KSin}
\end{lemma}

\begin{proof}
We again give three alternative proofs for these symplectic embeddings as in the previous lemma. Let us start with the plumbing $\mathcal{K}$. We take $(I_6,I_3,I_2)$ in $E(6)$ as in Figure \ref{1}, where $S$ is the $-6$ section. We symplectically resolve the intersection of $S$ and $A$, this gives the $-6$ central vertex of $\mathcal{K}$. The $-2$ spheres $B,F, G,J$ are the four arms of $\mathcal{K}$. This gives the first embedding. Alternatively, we take one $(I_5,I_4)$ configuration in $E(6)$. Let us take the first two fibers in Figure \ref{2}. We symplectically resolve the intersection points $p$ and $s$. The symplectic resolution $A+S+F$ gives the $-6$ central vertex, the $-2$ spheres $B,E, G,I$  give the four arms of $\mathcal{K}$ and we get the second embedding. Now we consider the $(I_5,I_5)$ configuration in $E(6)$ (Figure \ref{3}). From the symplectic resolutions at the points $p$ and $q$, we obtain the symplectic $-6$ sphere $A+S+F$ which is the central vertex, and the $-2$ spheres $B,E,G,J$ are the four arms of $\mathcal{K}$. This gives the last embedding of $\mathcal{K}$. 

In the above proofs, if we replace $E(6)$'s by $E(5)$'s (and take $S$ as the $-5$ section), we get the desired symplectic embeddings of the plumbing $\mathcal{S}_2$ in $E(5)$.
\end{proof}

\begin{lemma}
The plumbing $\mathcal{U}$ (given in Figure \ref{U}) symplectically embeds in $E(5)\#3\CPb$.
\label{Uin}
\end{lemma}

\begin{proof}
We take $(I_6,I_3,I_2)$ in $E(5)$ as in Figure \ref{1}, where $S$ is the $-5$ section. We symplectically resolve the intersection of $S$ and $A$ and obtain the symplectic $-5$ sphere $S+A$ as the central vertex of $\mathcal{U}$. Next, we blow-up the points $q,r,s$ as shown in Figure \ref{1}, and for the four arms of $\mathcal{U}$ we take
\begin{eqnarray*}
B,C, \tilde D : (-2,-2,-3)\\
F, \tilde E : (-2, -3)\\
G, \tilde H: (-2,-3)\\
\tilde J: (-3).
\end{eqnarray*}
This proves the lemma. Alternatively, let us consider two copies of the $(I_5,I_4)$ configuration in $E(5)$ as in Figure \ref{2}, where $S$ is the $-5$ section. We symplectically resolve the intersection point $p$ and get the central vertex $A+S$. Then we blow-up at the points $r,t,w$. The four arms of $\mathcal{U}$ are:
\begin{eqnarray*}
B,C, \tilde D : (-2,-2,-3)\\
F, \tilde G : (-2, -3)\\
J, \tilde K : (-2,-3)\\
\tilde E: (-3).
\end{eqnarray*}
Hence we again constructed $\mathcal{U}$ embedded in $E(5)\#3\CPb$ symplectically. 
\begin{remark}
We would like to note that in fact \;$\mathcal{U}$ symplectically embeds in $E(5)\#2\CPb$ by using the $(I_5,I_5)$ configuration in $E(5)$. We see this as follows. For the central vertex of\; $\mathcal{U}$, we take the symplectic $-5$ sphere $A+S+F$ obtained from the symplectic resolutions of the points $p$ and $q$ as shown in Figure \ref{3}. We blow-up only two points $r,s$, and for the four arms of $\mathcal{U}$ we take
\begin{eqnarray*}
B,C, \tilde D : (-2,-2,-3)\\
G, \tilde H : (-2, -3)\\
J, \tilde I: (-2,-3)\\
\tilde E: (-3).
\end{eqnarray*}
Hence $\mathcal{U}$ symplectically embeds in $E(5)\#2\CPb$, thus also in $E(5)\#3\CPb$. This gives the third proof of the lemma. However, from the simple Euler characteristic and signature computation we see that the $(\mathcal{U},\mathcal{V})$ star surgery applied to $E(5)\#2\CPb$ gives us a symplectic manifold above the Noether line, with $\chi_h=5, c_1^2 = 5$. But since in this paper our main interest is to construct manifolds on and below the Noether line, we will mainly consider $\mathcal{U}$ in $E(5)\#3\CPb$, so we state the lemma with $E(5)\#3\CPb$.
\label{Uin2blowup}
\end{remark}
\end{proof}

Let us close this section by a remark and a question.

\begin{remark}
In this section we have given the symplectic embeddings of the four plumbings in (some blow-ups of) $E(5)$ and $E(6)$ to construct exotic manifolds on and below the Noether line. In fact it is possible to obtain the same plumbings inside smaller manifolds as follows. Assume that a star shaped plumbing $\mathcal{A}$ symplectically embeds in $E(m)\#n\CPb$ $m>1, n\geq0$, where the central vertex of $\mathcal{A}$ is obtained from the symplectic resolutions of the $-m$ section with some of the intersecting $-2$ spheres of the singular fibers. Now let us consider $E(k)$ for $k<m$, and assume that $E(k)$ has the same configuration of $I_n$ fibers as in $E(m)$. We blow up the $-k$ section at $(m-k)$ distinct points away from the fibers, which gives a $-m$ sphere. Then, since we have the same configuration and obtained the same central vertex, we obtain the plumbing $\mathcal{A}$ in $E(k)\#(m-k+n)\CPb$ which is a smaller manifold. (See Section \ref{above} for some applications of this remark.)
\label{kmgeneral}
\end{remark}
\begin{question}
In lemmas \ref{Qin}, \ref{KSin}, \ref{Uin}, for each of the $\mathcal{Q}, \mathcal{K}, \mathcal{S}_2, \mathcal{U}$ plumbings, $\mathcal{A}$, we showed that $\mathcal{A}$ symplectically embeds in the corresponding manifold $M$ in three different ways, namely, via the $(I_6,I_3, I_2), (I_5, I_4)$ and $(I_5, I_5)$ configurations. We ask the following. Is there a symplectomorphism of $M$ to itself which takes one of the embeddings of $\mathcal{A}$ to another? It is an interesting problem. %Here one may want to start with checking whether there is an automorphism of $(H_2(M, \Q)$ which takes one of the embeddings of $\mathcal{A}$ to another at the level of homology. If such an automorphism did not exist, this would rule out the possibility of such a symplectomorphism existing. If the automorphism did exist, it might give some idea of how to find a symplectomorphism, or at least give some evidence that such a symplectomorphism may exist. (We thank the referee for this comment). We note that from Section \ref{2nd} we have the homology classes of all the components of $(I_6,I_3, I_2), (I_5, I_4)$ and $(I_5, I_5)$ configurations in $E(1)$. Now, for each of the four plumbings $\mathcal{A}$ let us consider the first and third embeddings that come from $(I_6,I_3, I_2)$ and $(I_5, I_5)$ configurations in $E(1)$. If we can find an automorphism of $(H_2(E(1), \Q)$ sending the first embedding of $\mathcal{A}$ to the third, then such an automorphism extends to $(H_2(M, \Q)$. Hence we need to find an automorphism which sends the basis $\{h, e_1, \cdots, e_9\}$ to itself in a way that it sends the spheres of the plumbing $\mathcal{A}$ to themselves. However, there are more than one sections and many possibilities to consider in the configurations and we are not able to find such a map so far. If we also want to consider the second embeddings, for the plumbings $\mathcal{Q}$ and $\mathcal{U}$ we used two copies of $(I_5, I_4)$, which requires starting with $(H_2(E(2), \Q)$ instead of $(H_2(E(1), \Q)$ which makes computations more complicated. 

In our constructions in the following sections, we will take the first embedding of each of the plumbings which comes from the $(I_6, I_3, I_2)$ configuration, unless otherwise stated.
\end{question}

\section{Constructions of simply connected, minimal, symplectic and exotic 4-manifolds on the Noether line}
\label{onNoe}
In this section we construct simply connected, minimal, symplectic 4-manifolds $X$ and $T$ with exotic smooth structures, lying on the Noether line and each with one basic class up to sign. We construct $X$ and $T$ via $(\mathcal{Q,R})$ and $(\mathcal{U,V})$ star surgeries, respectively. %Moreover, we show that $X$ and $T$ are homeomorphic to each other. %However, we do not know if they are diffeomorphic. 

\subsection{First construction via the $(\mathcal{Q,R})$-star surgery}
Let us begin with the first construction which is by the $(\mathcal{Q,R})$ star surgery. We have shown that $\mathcal{Q}$ symplectically embeds in $E(5) \#\CPb$ (Lemma \ref{Qin}). Let  
\begin{equation*}
X :=( W \setminus \mathcal{Q}) \cup \mathcal{R}
\end{equation*}
where $W := E(5)  \# \CPb$. Then $\sigma(X) = \sigma(W) - \sigma(\mathcal{Q}) + \sigma(\mathcal{R})= -41+7-2 = -36$ and $e(X) = e(W) - e(\mathcal{Q}) + e(\mathcal{R})=61-8+3 = 56$.
%\begin{eqnarray*}\sigma(X) &= \sigma(W) - \sigma(\mathcal{Q}) + \sigma(\mathcal{R})\\&=-41+7-2 = -36\end{eqnarray*}and \begin{eqnarray*}e(X) &= e(W) - e(\mathcal{Q}) + e(\mathcal{R})\\&=61-8+3 = 56.\end{eqnarray*}
Thus, 
\begin{equation}
\chi_h(X)=5 \;\;\;\text{and}\;\;\;  c_1^2(X) = 4= 2\chi_h-6
\end{equation}
which shows that $X$ is on the Noether line. From Van Kampen's theorem, we easily see that $X$ is simply connected as $\mathcal{R}$ is simply connected. %We also note that $b_2^+(X) > 1$. By Freedman's theorem we conclude that $X$ is homeomorphic to $9\CP \# 45\CPb$. Since $X$ is symplectic, by Taubes' theorem \cite{Tau} we know that it has a nontrivial Seiberg-Witten invariant. It follows that $X$ is an exotic copy of $9\CP \# 45\CPb$, since the latter has trivial Seiberg-Witten invariant by the connected sum formula for the Seiberg-Witten invariants (\cite{GS}). 
By Lemma \ref{exo} we conclude that $X$ is an exotic copy of $9\CP \# 45 \CPb$.
\label{X}

%$X$ has an exotic smooth structure. Indeed, by Freedman's theorem we first determine its homeomorphism type, $X$ is homeomorphic to $9\CP \# 45 \CPb$. Since $X$ is symplectic, %and have $b_2^+> 1$, by Taubes' theorem \cite{Tau}, it has a nontrivial Seiberg-Witten invariant. On the other hand, $9\CP \# 45 \CPb$ has trivial Seiberg-Witten invariant by the connected sum formula for the Seiberg-Witten invariants \cite{GS}. Hence exoticness of $X$ follows.

Next, let us prove that $X$ is minimal. We take a spin-c structure $s$ on $X$ and we look at its restriction to the filling $R$ and to the boundary $\partial R = \partial Q$ of $R$. We would like to show that the restriction $s|_{\partial R}$ extends over $Q$. %then this shows that $s$ on $X$ came from a spin-c structure on $W$. 
To this end, we use the long exact sequence of a pair for $(Q,\partial Q)$. We see that the restriction map $H^2(Q) \rightarrow H^2(\partial Q)$ is surjective because $H_1(Q) = H^3(Q,\partial Q) = 0$. Therefore every spin-c structure on $\partial Q = \partial R$ extends to $Q$. In particular, any spin-c structure on $X \setminus R$ extends to $(X \setminus R) \cup Q = W$. %Hence we have that the spin-c structure $s$ on $X$ came from a spin-c structure on $W$  %This shows that we cannot have any spin-c structure coming from $R$, all are coming from $W$ \cite{LS}, 
(see also Theorem \ref{Mich} above). Now let us check which basic classes of $W$ extends to $X$. Seiberg-Witten basic classes of $W= E(5) \# \CPb$ are $\pm f \pm E_1$ and $\pm 3f \pm E_1$ where $f, E_1 \in H^2(W,\Z)$ are the Poincar\'{e} duals of the homology classes of the regular fiber and the exceptional sphere coming from the blow-up, respectively (\cite{GS}). Let $P = f+ E_1$ and  $\gamma_0,..., \gamma_6$ be the basis of $H^2(\mathcal{Q},\Q)$ which is dual to $u_0,...,u_6$ (i.e., we have $\gamma_i(u_j)= \delta_{ij}$). (See Figure \ref{Q} for the spheres $u_i$). Then 
\begin{eqnarray*}
P|_\mathcal{Q} &= (P \cdot u_0)\gamma_0 +  (P \cdot u_1)\gamma_1 +  (P \cdot u_4)\gamma_4
&=  \gamma_0 + \gamma_1+\gamma_4.
\end{eqnarray*}
From inverse of the intersection matrix $[\mathcal{Q}]$ we find that
\begin{equation}
(P|_\mathcal{Q})^2 = -1.54
\end{equation}

Now let us assume that there is a basic class $\widetilde P$ on $X$ such that $\widetilde P |_{X-\mathcal{R}} = P |_{W-\mathcal{Q}}$. Then $(\widetilde P|_{\mathcal{R}})^2 \leq 0$, since $\mathcal{R}$ is negative definite. In fact, the contact 3-manifolds on the boundary of a star surgery plumbing is always planar by a construction in \cite{GayMark}. As a consequence, the fillings are always negative definite. %We would like to thank the referee for this remark.
Therefore the dimension of the SW moduli space satisfies the following:
\begin{eqnarray*}
d_X(\widetilde P) &=& \frac{\displaystyle{\widetilde P^2 -3\sigma(X)-2\chi(X)}} {\displaystyle{4}}\\
&=&\frac{\displaystyle{P^2 - (P|_{\mathcal{Q}})^2+ (\widetilde P|_{\mathcal{R}})^2 -3\sigma(X)-2\chi(X)}} {\displaystyle{4}}\\
&=& \frac{\displaystyle{-1 +1.54 +  (\widetilde P|_{\mathcal{R}})^2 -4}}{\displaystyle{4}}\\
&=& \frac{\displaystyle{-5 +1.54 +  (\widetilde P|_{\mathcal{R}})^2}}{\displaystyle{4}}\\
&<&0
\end{eqnarray*}
This contradicts our assumption that $\widetilde P$ is a basic class of $X$.

Next, let $L = f-E_1$ and assume that there is a basic class $\widetilde L$ on $X$ such that $\widetilde L |_{X-\mathcal{R}} = L |_{W-\mathcal{Q}}$. Similarly as above,
\begin{eqnarray*}
L|_\mathcal{Q} &=&  \gamma_0 - \gamma_1-\gamma_4,\\
(L|_\mathcal{Q})^2 &=& -0.8
\end{eqnarray*}
which implies
\begin{equation*}
d_X(\widetilde L) = \frac{\displaystyle{-5 + 0.8 +  (\widetilde L|_{\mathcal{R}})^2}}{\displaystyle{4}} <0
\end{equation*}
where $(\widetilde L|_{\mathcal{R}})^2 \leq 0$, as $\mathcal{R}$ is negative definite. Hence, again we reach a contradiction.

Let us look at the class $N:= 3f-E_1$. Under the assumption that there is a basic class $\widetilde N$ on $X$ such that $\widetilde N |_{X-\mathcal{R}} = N |_{W-\mathcal{Q}}$, we have
\begin{eqnarray*}
N|_\mathcal{Q} &=&  3\gamma_0 - \gamma_1-\gamma_4,\\
(N|_\mathcal{Q})^2 &=&9(\gamma_0)^2 + (\gamma_1)^2 + (\gamma_4)^2 -6 \gamma_0\gamma_1-6 \gamma_0\gamma_4+ 2 \gamma_1\gamma_4\\
&=&-1/261 (810+97+108-180-108+12)\\
&=& -2.83
\end{eqnarray*}
Therefore,
\begin{equation*}
d_X(\widetilde N) = \frac{\displaystyle{-5 + 2.83 +  (\widetilde N|_{\mathcal{R}})^2}}{\displaystyle{4}} <0.
\end{equation*}
This contradiction shows that $N$ does not extend to $X$ as a basic class, either. 

However, up to sign, the last class $M := 3f +E_1$ extends to the symplectic manifold $X$ as a basic class. In fact, if it did not extend as a basic class of $X$, this would contradict the fact that $X$ has at least one pair of basic classes by Taubes' theorem (\cite{Tau}). Hence we conclude that only the class $M$ extends to $X$ as a basic class. To prove that $X$ is minimal by the blow-up formula (\cite{GS, FS0}), we need to show that the class $M$ extends to $X$ uniquely. First, we compute the dimension of the SW moduli space for $W= E(5) \# \CPb$ at the class $3f+E_1$:

\begin{eqnarray*}
d_W(3f+E_1) &=& \frac{\displaystyle{(3f+E_1)^2 -3\sigma(W)-2\chi(W)}} {\displaystyle{4}}\\
&=& \frac{\displaystyle{-1 +3(41) -2(61)}}{\displaystyle{4}}\\
&=&0.
\end{eqnarray*}
Let $\widetilde M$ be a basic class which is an extension of the class $3f+E_1$ to the manifold $X =( W \setminus \mathcal{Q}) \cup \mathcal{R}$. Since $b_2^+(X)>1$, $X$ is of simple type (\cite{GS}). Hence $\widetilde M$ must satisfy that $d_X(\widetilde M) = 0$, i.e.,
\begin{equation}
\widetilde M^2 = 3\sigma(X)+2\chi(X)
\label{6}
\end{equation}
Let us first compute 
\begin{equation}
(3f+E_1)|_{\mathcal{Q}} = ((3f+E_1) \cdot u_0)\gamma_0 + ((3f+E_1) \cdot u_1)\gamma_1 + ((3f+E_1) \cdot u_4)\gamma_4 = 3\gamma_0+\gamma_1 +\gamma_4
\label{M'}
\end{equation}
since $(3f+E_1) \cdot u_j = 0$ for $j = 2,3,5,6$.
%We first compute the following:\begin{eqnarray*}(3f+E_1) \cdot u_0 &=& 3\\(3f+E_1) \cdot u_1 &=& 1\\(3f+E_1) \cdot u_4 &=& 1\\(3f+E_1) \cdot u_j &=& 0,  j \neq 0,1,4\end{eqnarray*}where $u_i$, $i = 1, \cdots, 6$ are the classes of the spheres of the star plumbing $\mathcal{Q}$.
Then, from the inverse matrix of the intersection form of $\mathcal{Q}$ given above, we find 
\begin{equation}
((3f+E_1)|_{\mathcal{Q}})^2 = (3\gamma_0+\gamma_1 +\gamma_4)^2 = -1315/261.
\label{7}
\end{equation}

Hence from equations \ref{6} and \ref{7} we have
\begin{eqnarray*}
0 &=& \widetilde M^2 - 3\sigma(X)- 2\chi(X)\\
&=& (3f+E_1)^2 - ((3f+E_1)|_{\mathcal{Q}})^2 + (\widetilde M|_{\mathcal{R}})^2 + 3 (36) - 2 (56)\\
&=& -1 + (1315/261) +(\widetilde M|_{\mathcal{R}})^2 - 4
\end{eqnarray*}

which gives
\begin{equation}
(\widetilde M|_{\mathcal{R}})^2 = -10/261. 
\label{8}
\end{equation}

From the intersection form of the filling $\mathcal{R}$ (Definiton \ref{QR}), we find its inverse:
\[1/261\begin{bmatrix}
-79&23\\
23&-10
\end{bmatrix}
\]

Let $r_1,r_2$ be the generators of the second homology of $\mathcal{R}$ with $r_1^2=-10, r_2^2=-79$, and $s_1,s_2$ be their Poincar\'{e} duals where $s_1^2=-79/261, s_2^2 = -10/261$. %That is to say, we have $s_i(r_j)= \delta_{ij}$. 
Let us write $\widetilde M|_{\mathcal{R}} = m(s_1)+n(s_2)$ for some $m,n \in \Z$. From Equation \ref{8}, 
\begin{eqnarray*}
(\widetilde M|_{\mathcal{R}})^2 = (m(s_1)+n(s_2))^2 &=& -10/261 \iff \\
m^2 (-79/261) +2mn (23/261)  + n^2(-10/261)&=& -10/261 \iff \\
79m^2 -46mn + 10n^2  = 10
\end{eqnarray*}
whose only integer (in fact rational) solutions are $m=0, n=\pm 1$. Hence 
\begin{equation}
\widetilde M|_{\mathcal{R}} = \pm s_2. 
\end{equation}
Now we need to show that exactly one of $(s_2)|_{\partial \mathcal{R}}$ or $(-s_2)|_{\partial \mathcal{R}}$ agrees with $(3f+E_1)|_{\partial(W \setminus \mathcal{Q})= \partial \mathcal{Q}}$. The restriction map is a homomorphism and we have that the restrictions of $s_2$ and $-s_2$ to $\partial \mathcal{R}$ are the same as the restrictions of $(3f+E_1)$ and $-(3f+E_1)$ to $\partial \mathcal{Q}$. From Equation \ref{M'} above, we have $M' := (3f+E_1)|_{\mathcal{Q}} = 3\gamma_0+\gamma_1 +\gamma_4$. By Lemma \ref{IMP}, to check whether $s_2|_{\partial\mathcal{R}}=-s_2|_{\partial\mathcal{R}}$, we need to check whether $M'$ and $-M'$ are in the same $2PD(H_2(\mathcal{Q},\Z))$ orbit. That is to say, we check whether $M' - (-M')=2M'$ is in the image of $2[\mathcal{Q}]$, i.e., whether the entries of the vector $[\mathcal{Q}]^{-1}M'$ are integers. Therefore we compute the product $[\mathcal{Q}]^{-1} [3,1,0,0,1,0,0]^T$ which is

$[-106/87, -193/261, -53/87, -27/29, -56/87, -212/261, -106/261]^T$. 

This shows that the restrictions of $s_2$ and $-s_2$ to $\partial \mathcal{R}$ are not the same. Moreover we know that at least one of $s_2$ or $-s_2$ has to be compatible with $3f+E_1$ on the boundary of $\mathcal{R}$, otherwise the manifold $X$ would not have any basic class. Thus, exactly one of $s_2$ or $-s_2$ is compatible with $3f+E_1$ on the boundary of $\mathcal{R}$. This shows that the class $3f+E_1$ extends uniquely to $X$ and $X$ has one basic class up to sign. Hence $X$ is minimal. 

Hence we proved the following theorem:

\begin{theorem}
There exists a simply connected, minimal, symplectic 4-manifold $X$ with an exotic smooth structure, and with one SW basic class up to sign, lying on the Noether line and obtained by the $(\mathcal{Q,R})$ star surgery.
\end{theorem}$\hfill\square$

\subsection{Second construction via the $(\mathcal{U,V})$-star surgery}
\label{firstbuild}
Now we construct a minimal, exotic 4-manifold $T$ via the $(\mathcal{U,V})$-star surgery, which is homeomorphic to $X$. %However we do not know if they are diffeomorphic or not, it is an interesting problem. 
First recall that in Lemma \ref{Uin} we have acquired the plumbing $\mathcal{U}$ symplectically embedded in $E(5)\#3\CPb$. Let

\begin{equation*}
T = ((E(5) \# 3\CPb) \setminus \mathcal{U}) \cup \mathcal{V} 
\end{equation*}
Then $\sigma(T) = \sigma(E(5) \# 3\CPb) - \sigma(\mathcal{U}) + \sigma(\mathcal{V})=-43+9-2 = -36$ and $e(T) = e(E(5) \# 3\CPb) - e(\mathcal{U}) + e(\mathcal{V}) =63-10+3 = 56.$
%\begin{eqnarray*}\sigma(T) &= \sigma(E(5) \# 3\CPb) - \sigma(\mathcal{U}) + \sigma(\mathcal{V})\\&=-43+9-2 = -36\end{eqnarray*}and \begin{eqnarray*}e(T) &= e(E(5) \# 3\CPb) -e(\mathcal{U}) + e(\mathcal{V})\\&=63-10+3 = 56.\end{eqnarray*}
Thus, we have
\begin{equation}
\chi_h(T)=5 \;\;\;\text{and}\;\;\;  c_1^2(T) = 4= 2\chi_h-6
\end{equation}

This shows that $T$ is on the Noether line. From Van Kampen's theorem, we easily see that $T$ is simply connected as $\mathcal{V}$ is simply connected \cite{KS}. By Lemma \ref{exo}, we have that $T$ is an exotic copy of $9\CP \# 45 \CPb$. (In particular, $T$ is homeomorphic to $X$ which is constructed in the previous subsection). 

Now we prove that $T$ is minimal. The basic classes of $E(5) \# 3\CPb$ are $\pm f \pm E_1 \pm E_2 \pm E_3$ and $\pm 3f \pm E_1 \pm E_2 \pm E_3$, where $E_i$ are the Poincar\'{e} duals of the homology classes of the exceptional divisors coming from the blow ups. Hence in total there are 16 Seiberg-Witten basic classes up to sign. 

\begin{lemma}
Let $S$ be a basic class of $E(5) \# 3\CPb$, such that $S \neq \pm(3f +E_1 + E_2 + E_3)$, and let $\widetilde S$ be an extension of $S$ to $T$ where $\widetilde S|_{T \setminus \mathcal{V}} = S|_{(E(5) \# 3\CPb) \setminus \mathcal{U}}$. Then, the dimension of the SW moduli space satisfies that $d_{T}(\widetilde S)<0$ showing that $\widetilde S$ is not a basic class of $T$.
\end{lemma}

\begin{proof}
Proof is a direct computation. Let us take $S= f +E_1 + E_2 + E_3$. Let $\gamma_0,..., \gamma_8$ be the basis of $H^2(\mathcal{U},\Q)$ which is dual to $u_0,...,u_8$ (i.e., we have $\gamma_i(u_j)= \delta_{ij}$). (See Figure \ref{U} for the spheres $u_i$). Then 
\begin{eqnarray*}
S|_\mathcal{U} &= (S \cdot u_0)\gamma_0 +  (S \cdot u_3)\gamma_3 +  (S \cdot u_5)\gamma_5 +(S \cdot u_7)\gamma_7 +(S \cdot u_8)\gamma_8\\
&=  \gamma_0 + \gamma_3+\gamma_5+ +\gamma_7+\gamma_8
\end{eqnarray*}
From inverse of the intersection matrix $[\mathcal{U}]$ given in Figure \ref{UUinv} we find that
\begin{equation}
(S|_\mathcal{U})^2 = -821/289
\end{equation}
Now let us assume that there is a basic class $\widetilde S$ on $T$ as in the statement. Then $(\widetilde S|_{\mathcal{V}})^2 \leq 0$, since $\mathcal{V}$ is negative definite. Therefore the dimension of the SW moduli space satisfies the following:
\begin{eqnarray*}
d_T(\widetilde S) &=& \frac{\displaystyle{\widetilde S^2 -3\sigma(T)-2\chi(T)}} {\displaystyle{4}}\\
&=&\frac{\displaystyle{S^2 - (S|_{\mathcal{U}})^2+ (\widetilde S|_{\mathcal{V}})^2 -3\sigma(T)-2\chi(T)}} {\displaystyle{4}}\\
&=& \frac{\displaystyle{-3 +(821/289) +  (\widetilde S|_{\mathcal{V}})^2 +108-112}}{\displaystyle{4}}\\
&=& \frac{\displaystyle{-7 +(821/289)  +  (\widetilde S|_{\mathcal{V}})^2}}{\displaystyle{4}}\\
&<&0
\end{eqnarray*}
This shows that $\widetilde S$ is not a basic class of $T$.

Note that above we have $\big{|}(S|_{\mathcal{U}})^2\big{|} = 821/289 < 7$. Moreover, for every other class $S'$ as in the statement of the lemma, we find that $\big{|}(S'|_\mathcal{U})^2\big{|} <7$. Note that $(S')^2 = -3$, too. Therefore in each case we have $d_T(\widetilde S') <0$. 
\end{proof}

However, the top class $Y:=3f +E_1 + E_2 + E_3$ up to sign extends to $T$ as a basic class by Taubes' theorem. To prove that $T$ is minimal by the blow-up formula, we will show that $Y$ extends to $T$ uniquely. First we have $
d_{E(5) \# 3\CPb}(Y) =0$. Let $\widetilde Y$ be a basic class which is an extension of the class $Y$ to the manifold $T$. Since $b_2^+(T)>1$, $T$ is of simple type, hence
\begin{equation}
\widetilde Y^2 = 3\sigma(T)+2\chi(T)
\end{equation}
From the inverse matrix of the intersection form of $\mathcal{U}$ given above, we find 
\begin{equation}
(Y|_{\mathcal{U}})^2 = (3\gamma_0+\gamma_3 +\gamma_5+\gamma_7+\gamma_8)^2 = -2029/289.
\end{equation}
Hence we have
\begin{eqnarray*}
0 &=& \widetilde Y^2 - 3\sigma(T)- 2\chi(T)\\
&=& Y^2 - (Y|_{\mathcal{U}})^2 + (\widetilde Y|_{\mathcal{V}})^2 -4\\
&=& -3+(2029/289) +(\widetilde Y|_{\mathcal{V}})^2 - 4
\end{eqnarray*}

which gives
\begin{equation}
(\widetilde Y|_{\mathcal{V}})^2 = -6/289.
\label{Y2}
\end{equation}
Inverse of the intersection form of $\mathcal{V}$ is
\[
\begin{bmatrix}
(-49/1445) & (-1/289)\\
(-1/289) & (-6/289)
\end{bmatrix}
\]
(See Definition \ref{UV} for the intersection form of $\mathcal{V}$). Let $a_1,a_2$ be the generators of the second homology of $\mathcal{V}$ and $b_1,b_2$ be their duals where $b_1^2=-49/1445, b_2^2 = -6/289$. %That is to say, we have $s_i(r_j)= \delta_{ij}$. 
Let $\widetilde Y|_{\mathcal{V}} = m(b_1)+n(b_2)$ for some $m,n \in \Z$. From Equation \ref{Y2}
\begin{eqnarray*}
(\widetilde Y|_{\mathcal{V}})^2 = (m(b_1)+n(b_2))^2 &=& -6/289 \iff \\
m^2 (-49/1445) +2mn (-1/289)  + n^2(-6/289)&=& -6/289 \iff \\
49m^2 +10 mn + 30n^2  = 30
\end{eqnarray*}
whose only integer solutions are $m=0, n=\pm 1$. Hence 
\begin{equation}
\widetilde Y|_{\mathcal{V}} = \pm b_2. 
\end{equation}
Now we need to show that exactly one of $(b_2)|_{\partial \mathcal{V}}$ or $(-b_2)|_{\partial \mathcal{V}}$ agrees with $Y|_ {\partial \mathcal{U}}$. By Lemma \ref{IMP}, we compute the product $[\mathcal{U}]^{-1} [3,0,0,1,0,1,0,1,1]^T$ which is

\noindent $-1/289[407, 332, 257,182,302, 197, 302, 197,232]^T$. This shows that the restrictions of $b_2$ and $-b_2$ to $\partial \mathcal{R}$ are not the same and exactly one of $\pm b_2$ is compatible with $Y$ on the boundary of $\mathcal{V}$. This shows that the class $Y$ extends uniquely to $T$ and $T$ has one basic class up to sign. Hence $T$ is minimal. 

As a result, we have 

\begin{theorem}
There exists a simply connected, minimal, symplectic 4-manifold $T$ with an exotic smooth structure, and with one SW basic class up to sign, lying on the Noether line. $T$ is obtained by the $(\mathcal{U,V})$-star surgery and homeomorphic to the manifold $X$ constructed in Section~\ref{X}.
\end{theorem}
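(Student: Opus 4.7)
The plan is to verify four items in sequence: the invariants, simple connectedness, minimality, and homeomorphism with $X$. The first is immediate from the Euler characteristic and signature computation in the text, which put $T$ on the Noether line. For simple connectedness, the filling $\mathcal{V}$ is simply connected by \cite{KS}, so all meridians of the components of $\mathcal{U}$, viewed as loops in $\partial\mathcal{V}=\partial\mathcal{U}$, become null-homotopic in $\mathcal{V}$. Since $E(5)\#3\CPb$ is simply connected, its fundamental group is already the quotient of $\pi_1((E(5)\#3\CPb)\setminus\mathcal{U})$ by those meridians, so Van Kampen's theorem forces $\pi_1(T)=1$, exactly as in the proofs of Sections~\ref{3rd} and \ref{4th}.

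The heart of the proof is minimality. The plan here is to mimic the Seiberg--Witten analysis of Section~\ref{3rd} with $(W,\mathcal{U},\mathcal{V}):=(E(5)\#3\CPb,\mathcal{U},\mathcal{V})$ in place of $(E(5)\#\CPb,\mathcal{Q},\mathcal{R})$. Since the SW basic classes of $E(5)$ are $\pm f,\pm 3f$, the blow-up formula \cite{FS0,GS} gives those of $W$ as $\pm kf\pm E_1\pm E_2\pm E_3$ for $k\in\{1,3\}$ with all independent sign choices. For each candidate class $P$ other than the top class $M:=3f+E_1+E_2+E_3$, I would compute $P|_{\mathcal{U}}=\sum_{i=0}^{8}(P\cdot u_i)\gamma_i$ from the intersection numbers of $f$ and the $E_j$ with the nine components of $\mathcal{U}$, then compute $(P|_{\mathcal{U}})^2$ via the inverse of the $9\times 9$ intersection matrix $[\mathcal{U}]$, and finally show by contradiction that if $P$ extended to a basic class $\widetilde P$ of $T$ then
\[
d_T(\widetilde P)=\frac{P^2-(P|_{\mathcal{U}})^2+(\widetilde P|_{\mathcal{V}})^2-3\sigma(T)-2\chi(T)}{4}<0.
\]
The inequality $(\widetilde P|_{\mathcal{V}})^2<0$ is the direct analogue of Lemma~\ref{Lemma1} and follows from the fact that the intersection form of $\mathcal{V}$ is negative definite (the data $e(\mathcal{V})=3,\sigma(\mathcal{V})=-2$ together with simple connectedness forces $b_2^+(\mathcal{V})=0$). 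Taubes' theorem \cite{Tau} then forces $\pm M$ to descend to a basic class of $T$, giving a unique basic class up to sign, and the blow-up formula implies that $T$ is minimal.

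For the homeomorphism, $T$ and $X$ are both smooth, closed, simply connected 4-manifolds with $e=56$ and $\sigma=-36$. Since $-36\not\equiv 0\pmod{16}$, Rokhlin's theorem shows that neither can be spin, so their intersection forms are isomorphic as odd, indefinite unimodular forms of the same rank and signature; Freedman's classification then yields $T\cong X$ topologically. The main obstacle I foresee is the combinatorial bookkeeping in the SW step, namely determining the intersection of each of $f,E_1,E_2,E_3$ with each of the nine components of $\mathcal{U}$ (the central $-5$ vertex $u_0$ coming from the symplectic resolution of $S\cap A$; the four tip $(-3)$-vertices $\widetilde D,\widetilde E,\widetilde H,\widetilde J$ produced by the blow-ups at $q,r,s$; and the four untouched $(-2)$-vertices $B,C,F,G$). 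Once that table is in place, working through the sixteen pairs of candidate basic classes reduces to routine matrix arithmetic entirely parallel to the computations in Section~\ref{3rd}.
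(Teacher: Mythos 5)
Your proposal follows the same route as the paper: compute $e(T)=56$, $\sigma(T)=-36$ to place $T$ on the Noether line, deduce $\pi_1(T)=1$ via Van Kampen from the simple connectedness of $\mathcal{V}$, rule out all non-canonical basic classes by the negative-dimension argument and invoke Taubes plus the blow-up formula for minimality, and apply Freedman for the homeomorphism with $X$. In fact you supply more detail than the paper does on the minimality computation (which the paper explicitly skips as "similar to the previous cases") and on the non-spin/Freedman step, and your outline of that bookkeeping is consistent with the nine-component structure of $\mathcal{U}$ described in the construction.
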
$\hfill\square$

%constructed a minimal, simply connected, symplectic 4-manifold $T$ by $(\mathcal{U,V})$-star surgery. The manifold $T$ is on the Noether line and homeomorphic to $X$. 
%However Seiberg-Witten invariants cannot distinguish the manifolds $X$ and $T$ and, as of today, 

We note that starting with the configuration $\mathcal{U}$ in $E(5)\#3\CPb$, by two blow-downs and two symplectic resolutions we obtain the configuration $\mathcal{Q}$ in $E(5) \# \CPb$ as it can be directly seen from the proofs of lemmas \ref{Uin} and \ref{Qin}. %Indeed, let us blow-down the exceptional spheres coming from blowing up the points $r$ and $s$ (see Figure \ref{E52}). Now recall that we obtained the -5 sphere of $\mathcal{U}$ from resolving the intersection point $p$ of $S$ and $A$. Next let us symplectically resolve the intersection of the -5 sphere $S+A$ with the sphere $B$ and then $C$. (Also recall that we did blow up the point $q$). The resulting plumbing is $\mathcal{Q}$ lying in $E(5) \# \CPb$ as in the construction of the manifold $X$. 
However, we do not know if $X$ and $T$ are diffeomorphic to each other, it is an alluring problem.

\section{Constructions of simply connected, minimal, symplectic and exotic 4-manifolds between the Noether and half Noether lines}
In this section we construct simply connected, minimal, symplectic 4-manifolds with exotic smooth structures, lying between the Noether and half Noether lines and each with one basic class up to sign.
\label{4th}

\subsection{First construction by the $(\mathcal{K,L})$-star surgery}
\label{firstconst}
Our first construction is by the $(\mathcal{K,L})$-star surgery. We have constructed the plumbing $\mathcal{K}$ symplectically embedded in $E(6)$ in Lemma \ref{KSin}. Let us let
\begin{equation*}
Y = ((E(6) \setminus \mathcal{K}) \cup \mathcal{L}.
\end{equation*}
Then $\sigma(Y) = \sigma(E(6)) - \sigma(\mathcal{K}) + \sigma(\mathcal{L}) =-48+5-1 = -44$ and $e(Y)= e(E(6)) - e(\mathcal{K}) + e(\mathcal{L}) =72-6+2 = 68.$
%\begin{eqnarray*}\sigma(Y) &= \sigma(E(6)) - \sigma(\mathcal{K}) + \sigma(\mathcal{L})\\&=-48+5-1 = -44\end{eqnarray*}and \begin{eqnarray*}e(Y) &= e(E(6)) - e(\mathcal{K}) + e(\mathcal{L})\\&=72-6+2 = 68.\end{eqnarray*}
Thus, 
\begin{equation}
\chi_h(Y)=6  \;\;\;\text{and}\;\;\; c_1^2(Y)= 4
\end{equation}
So, we have 
\begin{equation}
2\chi_h(Y) -6 > c_1^2(Y)> \chi_h(Y)-3
\end{equation}
which shows that $Y$ is in between the Noether and the half Noether lines.

The manifold $Y$ is simply connected. In fact, the generator of $\pi_1(\mathcal{L})$ can be isotoped into the boundary of $\mathcal{L}$ and it restricts to the boundary Seifert fibered space as a meridian of any of the -2 surgery curves in the plumbing diagram (Proposition 3.11 in \cite{KS}). On the other hand, by our construction, one of the spheres $u_j$ of $\mathcal{K}$ is a part of the $I_6$ fiber and the other transversally intersecting spheres are not cut out in the star surgery. Hence the meridian of $u_j$ bounds a disk in the complement of $\mathcal{K}$ which is contained in a sphere component of the $I_6$ fiber transversely intersecting $u_j$. That is to say, the generator of $\pi_1(\mathcal{L})$  is isotopic to the meridian of $u_j$ in the embedding which is homotopically trivial, hence $Y$ is simply connected (see also the proof of Theorem 5.22 in \cite{KS}). We have that $Y$ is an exotic copy of $11\CP \# 55\CPb$ by Lemma \ref{exo}.

%We also note that $b_2^+(Y) > 1$. By Freedman's theorem we have that $Y$ is homeomorphic to $11\CP \# 55\CPb$. From Taubes' theorem (\cite{Tau}) we know that $Y$ has nontrivial Seiberg-Witten invariant. On the other hand, Seiberg-Witten invariant of $11\CP \# 55\CPb$ is trivial by the connected sum formula for the Seiberg-Witten invariants (\cite{GS}). Hence $Y$ is an exotic copy of $11\CP \# 55\CPb$.

Next we prove that $Y$ is minimal. Seiberg-Witten basic classes of $E(6)$ are $\pm 2f$ and $\pm 4f$ where $f \in H^2(Y,\Z)$ is the Poincar\'{e} dual of the homology class of the fiber. We need to determine which classes extend to $Y$. Let $P = 2f$ and  $\gamma_0,..., \gamma_4$ be the basis of $H^2(\mathcal{K},\Q)$ which is dual to $u_0,...,u_4$. (See the paragraph above Figure \ref{K} for the spheres $u_i$). Then \begin{eqnarray*}P|_\mathcal{K} &= (P \cdot u_0)\gamma_0 &=  2\gamma_0. \end{eqnarray*}From inverse of the intersection matrix $[\mathcal{K}]$ above we find that \begin{equation*}(P|_\mathcal{K})^2 = 4\gamma_0^2 = 4(-4/16) = -1. \end{equation*}

Now we assume that there is a basic class $\widetilde P$ on $Y$ such that $\widetilde P |_{Y-\mathcal{L}} = P |_{E(6)-\mathcal{K}}$. We have $(\widetilde P|_{\mathcal{L}})^2 \leq 0$ since the intersection form of $\mathcal{L}$ is negative definite.

Therefore the dimension of the SW moduli space:\begin{eqnarray*}d_Y(\widetilde P) &=& \frac{\displaystyle{\widetilde P^2 -3\sigma(Y)-2\chi(Y)}} {\displaystyle{4}}\\&=&\frac{\displaystyle{P^2 - (P|_{\mathcal{K}})^2+ (\widetilde P|_{\mathcal{L}})^2 -3\sigma(Y)-2\chi(Y)}} {\displaystyle{4}}\\&=& \frac{\displaystyle{0 +1 +  (\widetilde P|_{\mathcal{L}})^2 -4}}{\displaystyle{4}}\\&=& \frac{\displaystyle{(\widetilde P|_{\mathcal{L}})^2}-3}{\displaystyle{4}}\\&<&0\end{eqnarray*}
This contradicts our assumption that $\widetilde P$ is a basic class of $Y$. Therefore, the class $P$ does not descend to a basic class of $Y$.

On the other hand, by Taubes' theorem (\cite{Tau}) the top class $R = 4f$ descends to a basic class of $Y$ up to sign. By the blow-up formula (\cite{GS, FS0}) to conclude that $Y$ is minimal, we need to show that $R = 4f$ descends to $Y$ uniquely. First, we compute the dimension of the SW moduli space for $E(6)$ at the class $4f$:

\begin{eqnarray*}
d_{E(6)}(4f) &=& \frac{\displaystyle{(4f)^2 -3\sigma(E(6))-2\chi(E(6))}} {\displaystyle{4}}\\
&=& \frac{\displaystyle{0 -3(-48) -2(72)}}{\displaystyle{4}}\\
&=&0.
\end{eqnarray*}
Let $\widetilde{4f}$ be a basic class which is an extension of the class $4f$ to the manifold $Y =( E(6) \setminus \mathcal{K}) \cup \mathcal{L}$. Since $b_2^+(X)>1$, $Y$ is of simple type (\cite{GS}). Hence $\widetilde{4f}$ must satisfy that $d_Y(\widetilde{4f}) = 0$, i.e.,
\begin{equation}
\widetilde{4f}^2 = 3\sigma(Y)+2\chi(Y)
\end{equation}
Let us first compute 
\begin{equation}
((4f)|_{\mathcal{K}})^2 = (((4f) \cdot u_0)\gamma_0)^2 = (4 \gamma_0)^2 = 16 (-4/16)=-4
\end{equation}
since $(4f) \cdot u_j = 0$ for $j = 1, \cdots, 4$ and from the inverse of the intersection matrix of $\mathcal{K}$, we have $(\gamma_0)^2= -4/16$.

Hence we have
\begin{eqnarray*}
0 &=& \widetilde{4f}^2 - 3\sigma(Y)- 2\chi(Y)\\
&=& (4f)^2 - ((4f)|_{\mathcal{K}})^2 + (\widetilde{4f}|_{\mathcal{L}})^2 - 3 (-44) - 2 (68)\\
&=& 0 +4 +(\widetilde{4f}|_{\mathcal{L}})^2 - 4
\end{eqnarray*}

which gives
\begin{equation}
(\widetilde{4f}|_{\mathcal{L}})^2 = 0.
\label{4f}
\end{equation}

The intersection form of the filling $\mathcal{L}$ is $[-4]$ (see Definition \ref{KL}), and its inverse is $[-1/4]$. Hence, with Equation \ref{4f}, this gives that $\widetilde{4f}|_{\mathcal{L}}=0$. Therefore the class $4f$ extends to $Y$ uniquely. This shows that $Y$ is minimal.

%Moreover we note that from Theorem \ref{Mich} we have\begin{equation*}|SW_Y(\pm \widetilde{4f})| = |SW_{E(6)}(\pm 4f)|\end{equation*}

Hence we have the following theorem:
\\
\begin{theorem}
There exists a simply connected, minimal, symplectic 4-manifold $Y$ with an exotic smooth structure, and with one SW basic class up to sign, lying in between the Noether and the half Noether lines, obtained by the $(\mathcal{K,L})$-star surgery.
\end{theorem}$\hfill\square$

\subsection{Second construction via the $(\mathcal{S}_2,\mathcal{T}_2)$ star surgery}

\label{last}
In this construction we will apply the $(\mathcal{S}_2,\mathcal{T}_2)$-star surgery to the elliptic surface $E(5)$. In Lemma \ref{KSin} we have shown that the plumbing $\mathcal{S}_2$ symplectically embeds in $E(5)$. Let us let
\begin{equation*}
Z = ((E(5) \setminus \mathcal{S}_2) \cup \mathcal{T}_2.
\end{equation*}
Then $\sigma(Z) = \sigma(E(5)) - \sigma(\mathcal{S}_2) + \sigma(\mathcal{T}_2)= -40 +5-2 = -37$ and $e(Z) = e(E(5)) - e(\mathcal{S}_2) + e(\mathcal{T}_2) = 60 - 6 +3 = 57$.
%\begin{eqnarray*}\sigma(Z) &= \sigma(E(5)) - \sigma(\mathcal{S}_2) + \sigma(\mathcal{T}_2)\\&= -40 +5-2 = -37\end{eqnarray*}and \begin{eqnarray*}e(Z) &= e(E(5)) - e(\mathcal{S}_2) + e(\mathcal{T}_2)\\&= 60 - 6 +3 = 57\end{eqnarray*}
Thus, 
\begin{equation}
\chi_h(Z)=5 \;\;\; \text{and}\;\;\; c_1^2(Z)= 3.
\end{equation}
So, we have  
\begin{equation}
2\chi_h(Z) -6 > c_1^2(Z)> \chi_h(Z)-3
\end{equation}
which shows that $Z$ is in between the Noether and the half Noether lines. The manifold $Z$ is simply connected as in the previous example (see the proof of Lemma 5.2 in \cite{KS}). We also note that $b_2^+(Z) > 1$ and %$Z$ is exotic from the Taubes' theorem and the connected sum formula for Seiberg-Witten invariants as we discussed above. %Again by Freedman's theorem we show that $Z$ is homeomorphic to $9\CP \# 46\CPb$. As in the previous cases, we conclude that $Z$ is not diffeomorphic to $9\CP \# 46\CPb$ by the Taubes' theorem and vanishing theorems (\cite{Tau, GS}).
hence by Lemma \ref{exo} we conclude that $Z$ is an exotic copy of $9\CP \# 46 \CPb$.

Next we prove that $Z$ is minimal. Seiberg-Witten basic classes of $E(5)$ are $\pm f$ and $\pm 3f$. We will determine which classes extend to $Z$. Let $\gamma_0,..., \gamma_4$ be the basis of $H^2(\mathcal{S}_2,\Q)$ which is dual to $u_0,...,u_4$. (See the paragraph above Figure \ref{S2} for the spheres $u_i$). Then \begin{eqnarray*}f|_{\mathcal{S}_2} &= (f \cdot u_0)\gamma_0 &=  \gamma_0. \end{eqnarray*}From inverse of the intersection matrix $[\mathcal{S}_2]$ we find that \begin{equation*}(f|_{\mathcal{S}_2})^2 = \gamma_0^2 = -4/12. \end{equation*}

Now we assume that there is a basic class $\widetilde P$ on $Z$ such that $\widetilde P |_{Z-\mathcal{T}_2} = f |_{E(5)-\mathcal{S}_2}$. We have $(\widetilde P|_{\mathcal{T}_2})^2 \leq 0$ since the intersection form of $\mathcal{T}_2$ is negative definite. Therefore the dimension of the SW moduli space is\begin{eqnarray*}d_Z(\widetilde P) &=& \frac{\displaystyle{\widetilde P^2 -3\sigma(Z)-2\chi(Z)}} {\displaystyle{4}}\\&=&\frac{\displaystyle{f^2 - (f|_{\mathcal{S}_2})^2+ (\widetilde P|_{\mathcal{T}_2})^2 -3\sigma(Z)-2\chi(Z)}} {\displaystyle{4}}\\&=& \frac{\displaystyle{0 +4/12 +  (\widetilde P|_{\mathcal{T}_2})^2 -3}}{\displaystyle{4}}\\&=& \frac{\displaystyle{4/12+ (\widetilde P|_{\mathcal{T}_2})^2}-3}{\displaystyle{4}}\\&<&0.\end{eqnarray*}This contradicts our assumption that $\widetilde P$ is a basic class of $Z$. Therefore, the class $f$ does not descend to a basic class of $Z$. 

However by Taubes' theorem, we conclude that only the top class $3f$ descends to a basic class of $Z$, up to sign. Next we show that $3f$ descends to $Z$ uniquely. First note that we have
\begin{eqnarray*}d_{E(5)}(3f) &=& \frac{\displaystyle{(3f)^2 -3\sigma(E(5))-2\chi(E(5))}}{\displaystyle{4}}\\&=& \frac{\displaystyle{3(48)-2(72)}}{\displaystyle{4}}\\&=&\frac{\displaystyle{120-120}}{\displaystyle{4}} = 0.\end{eqnarray*}
Now, we assume that there is a basic class $\widetilde {3f}$ on $Z$ such that $\widetilde {3f} |_{Z-\mathcal{T}_2} = 3f |_{E(5)-\mathcal{S}_2}$. Since $b_2^+(Z)>1$, $Z$ is of simple type (\cite{GS}). Hence $\widetilde{3f}$ must satisfy that $d_Z(\widetilde{3f}) = 0$, i.e.,
\begin{equation}
\widetilde{3f}^2 = 3\sigma(Z)+2\chi(Z)
\end{equation}
We also have
\begin{equation*}(3f|_{\mathcal{S}_2})^2 = (3\gamma_0)^2= 9(-4/12)= -3.\end{equation*}

Hence we have the following
\begin{eqnarray*}
0 &=& \widetilde{3f}^2 - 3\sigma(Z)- 2\chi(Z)\\
&=& (3f)^2 - ((3f)|_{\mathcal{S}_2})^2 + (\widetilde{3f}|_{\mathcal{T}_2})^2 - 3 (-37) - 2 (57)\\
&=& 0 +3 +(\widetilde{3f}|_{\mathcal{T}_2})^2 - 3
\end{eqnarray*}

which gives
\begin{equation}
(\widetilde{3f}|_{\mathcal{T}_2})^2 = 0.
\label{3f}
\end{equation}
Now, inverse of the intersection form of $\mathcal{T}_2$ is 
\[
\begin{bmatrix}
    -1/4 &0\\       
    0&-1/3\\
\end{bmatrix}
\]
(See Definiton \ref{ST} for the intersection form of $\mathcal{T}_2$). Let $q_1,q_2$ be the generators of the second homology of $\mathcal{T}_2$ with $q_1^2=-4, q_2^2=-3$, and $t_1,t_2$ be their duals where $t_1^2=-1/4, t_2^2 = -1/3$. %That is to say, we have $s_i(r_j)= \delta_{ij}$.
Let us write $\widetilde {3f}|_{\mathcal{T}_2} = m(t_1)+n(t_2)$ for some $m,n \in \Z$. 
\begin{eqnarray*}
(\widetilde {3f}|_{\mathcal{T}_2})^2 = (m(t_1)+n(t_2))^2 &=& 0\iff \\
m^2 (-1/4) + n^2(-1/3)&=& 0 
\end{eqnarray*}
whose only integer (in fact real) solutions are $m=n=0$. Hence
\begin{equation}
\widetilde {3f}|_{\mathcal{T}_2} = 0.
\end{equation}
This shows that the class $3f$ extends to $Z$ uniquely, and by the blow-up formula we have that $Z$ is minimal.

%Indeed, $3f|_{\mathcal{S}_2}$ is a canonical class of $\mathcal{S}_2$ and descends to $Z$ as a characteristic class $\widetilde R$ with $\widetilde R |_{\mathcal{T}_2} =0$ (Proposition 3.3 and Section 6 in \cite{KS}).
%Therefore, by Theorem 3 in \cite{Mar} (or Theorem 5.10, \cite{KS})\begin{equation*}|SW_{Z,H}(\pm \widetilde (R))| = |SW_{E(5),H}(\pm (3f))| \neq 0.)\end{equation*}

Hence we have proved the following:
\\
\begin{theorem}
There exists a simply connected, minimal, symplectic 4-manifold $Z$ with an exotic smooth structure, and with one SW basic class up to sign, lying in between the Noether and the half Noether lines, obtained by the $(\mathcal{S}_2,\mathcal{T}_2)$-star surgery.
\end{theorem}$\hfill\square$

\section{Applications of Remark \ref{kmgeneral}: A simply connected, minimal, symplectic and exotic 4-manifold above the Noether line}
\label{above}
Let us end by giving a construction of a simply connected, minimal, symplectic and exotic 4-manifold with one SW basic class up to sign, lying above the Noether line by using the $(\mathcal{U,V})$-star surgery and Remark \ref{kmgeneral}. We note that in the literature there are such exotic 4-manifolds with the same topological invariants. Below we give a different construction, without using knot surgery or mapping class groups. It would be interesting to compare our manifolds with the previously constructed ones to see whether they are diffeomorphic. 

We have shown that $\mathcal{U}$ symplectically embeds in $E(5) \# 2 \CPb$ (see Remark \ref{Uin2blowup}). By Remark \ref{kmgeneral}, we have, in particular, $\mathcal{U} \subset E(2) \# 5 \CPb$  and $\mathcal{U} \subset E(1) \# 6 \CPb$ symplectically. We apply the $(\mathcal{U,V})$-star surgery to first $E(2) \# 5 \CPb$. This gives a manifold $M$ above the Noether line as we see from the simple computation of the invariants. In fact, we have $\chi_h(M) =2, c_1^2(M) =2$ and by Van Kampen's theorem we show that $M$ is simply connected since $\mathcal{V}$ is simply connected \cite{KS}. Hence by Lemma \ref{exo} we have that $M$ is an exotic copy of $3\CP \# 17\CPb$. 

Now we show that $M$ is minimal. The proof goes parallel to the one in Section \ref{firstbuild}, but let us spell out the minor differences. Recall that $0 \in H^2(E(2),\Z)$ is the only basic class of $E(2)$. Then by the blow-up formula, $E(2)\#5\CPb$ has 32 basic classes: $\pm E_1 \pm \cdots \pm E_5$, i.e., 16 basic classes up to sign. In this construction we note that we use the symplectic embedding of the $(I_5,I_5)$ configuration in $E(2)$. Recall that we symplectically resolve the points $p,q$ and blow up the points $r,s$ as shown in Figure \ref{3}. Let us denote the exceptional divisors corresponding to $r,s$ by $E_1, E_2$, respectively. Next, as in Remark \ref{kmgeneral} we blow up the $-2$ section at 3 distinct points, let us call the exceptional spheres $E_3,E_4,E_5$. Hence we have the following intersections only:
\begin{eqnarray*}
E_1 \cdot u_3 = E_1 \cdot u_8 = 1\\
E_2 \cdot u_5 = E_2 \cdot u_7 = 1\\
E_3 \cdot u_0 = E_4 \cdot u_0 = E_5 \cdot u_0 = 1
\end{eqnarray*}
(See Figure \ref{U} for the spheres $u_i$). Let us prove
\begin{lemma}
Let $P$ be a basic class of $E(2) \# 5\CPb$, such that $P \neq \pm(E_1 + \cdots + E_5)$, and let $\widetilde P$ be an extension of $P$ to $M$ where $\widetilde P|_{M \setminus \mathcal{V}} = P|_{(E(2) \# 5\CPb) \setminus \mathcal{U}}$. Then, the dimension of the SW moduli space satisfies that $d_{M}(\widetilde P)<0$ showing that $\widetilde P$ is not a basic class of $M$.
\end{lemma}
\begin{proof}
Proof is a direct computation. Let us take $P = E_1 + E_2 + E_3+E_4 -E_5$. From inverse of the intersection matrix $[\mathcal{U}]$ given in Figure \ref{UUinv} we find that
\begin{eqnarray*}
(P|_\mathcal{U})^2 &= (\gamma_3 + \gamma_8 + \gamma_5 + \gamma_7+ \gamma_0 + \gamma_0 - \gamma_0)^2\\
&=  (\gamma_0 + \gamma_3+\gamma_5+ +\gamma_7+\gamma_8)^2\\
&=-821/289
\end{eqnarray*}
where $\gamma_i$ are the basis elements of $H^2(\mathcal{U},\Q)$ dual to $u_i$. Now let us assume that there is a basic class $\widetilde P$ on $M$ as in the statement. Then $(\widetilde P|_{\mathcal{V}})^2 \leq 0$, since $\mathcal{V}$ is negative definite. Therefore the dimension of the SW moduli space satisfies the following:
\begin{eqnarray*}
d_M(\widetilde P) &=& \frac{\displaystyle{\widetilde P^2 -3\sigma(M)-2\chi(M)}} {\displaystyle{4}}\\
&=&\frac{\displaystyle{P^2 - (P|_{\mathcal{U}})^2+ (\widetilde P|_{\mathcal{V}})^2 -3(-14)-2(22)}} {\displaystyle{4}}\\
&=& \frac{\displaystyle{-5 +(821/289) +  (\widetilde P|_{\mathcal{V}})^2 -2}}{\displaystyle{4}}\\
&=& \frac{\displaystyle{-7 +(821/289)  +  (\widetilde P|_{\mathcal{V}})^2}}{\displaystyle{4}}\\
&<&0
\end{eqnarray*}
This shows that $\widetilde P$ is not a basic class of $M$.

Moreover, for every other class $P'$ as in the statement of the lemma, we find that $\big{|}(P'|_\mathcal{U})^2\big{|} <7$, therefore in each case we have $d_M(\widetilde P') <0$. 
\end{proof}
However, the top class $K:=E_1 + \cdots+ E_5$ up to sign extends to $M$ as a basic class by Taubes' theorem. To prove minimality, we will show that $K$ extends to $M$ uniquely. First we have $
d_{E(2) \# 5\CPb}(K) =0$. Let $\widetilde K$ be a basic class which is an extension of the class $K$ to the manifold $M$ which is of simple type. Hence
\begin{equation}
\widetilde K^2 = 3\sigma(M)+2\chi(M)
\end{equation}
From the inverse matrix of the intersection form of $\mathcal{U}$ given above, we find 
\begin{eqnarray*}
(K|_{\mathcal{U}})^2 &=(\gamma_3 + \gamma_8 + \gamma_5 + \gamma_7+ \gamma_0 + \gamma_0 +\gamma_0)^2\\
&=(3\gamma_0+\gamma_3 +\gamma_5+\gamma_7+\gamma_8)^2 = -2029/289.
\end{eqnarray*}
Hence we have
\begin{eqnarray*}
0 &=& \widetilde K^2 - 3\sigma(M)- 2\chi(M)\\
&=& -5+(2029/289) +(\widetilde K|_{\mathcal{V}})^2 - 2
\end{eqnarray*}
which gives
\begin{equation}
(\widetilde K|_{\mathcal{V}})^2 = -6/289
\end{equation}
as in Equation \ref{Y2} of Section \ref{firstbuild}. We note that rest of the proof is exactly the same as in Section \ref{firstbuild}, after Equation \ref{Y2}. Hence we have 
\begin{theorem}
There exists a simply connected, minimal, symplectic 4-manifold $M$ with an exotic smooth structure, and with one SW basic class up to sign, lying above the Noether line and obtained by the $(\mathcal{U,V})$-star surgery.
\end{theorem}$\hfill\square$

Let us also ask the following: 
\begin{question}
Can we build exotic 4-manifolds, via star surgeries, that are arbitrarily close to the BMY-line $c_1^2 =9\chi_h$?
\end{question}

Lastly, let us note that if we apply the $(\mathcal{U,V})$-star surgery to $E(1) \# 6 \CPb$, we see that the resulting symplectic manifold has $\chi_h = 1$, and $c_1^2=1$, so it is homeomorphic to $\CP \# 8 \CPb$. Exoticness can be shown from the symplectic Kodaira dimension, but since this is $b_2^+=1$ case, the proof of minimality is longer. However, this does not improve the results in \cite{KS} and we also use the same technique; a star surgery. Therefore, we will not pursue these computations here. Note that to obtain smaller exotic manifolds, one needs to consider additional fishtail and cusp fibers in the starting manifolds as in \cite{KS}. But in this paper our main interest is manifolds on and below the Noether line, and we have only worked with $I_n$ fibers.

%symplectic manifold has $e=11$ and $\sigma = -7$ and thus $\chi_h = 1$, and $c_1^2=1$. It can be shown that it is an exotic copy of $\CP \# 8 \CPb$. 
%We do not know if this manifold is diffeomorphic to the one we constructed in Remark \ref{rmk}, part {\bf 1} and it is an engrossing problem.

\bibliography{References}
\bibliographystyle{spmpsci}

% Non-BibTeX users please use
%\begin{thebibliography}{}
%
% and use \bibitem to create references. Consult the Instructions
% for authors for reference list style.
%
%\bibitem{RefJ}
% Format for Journal Reference
%Author, Article title, Journal, Volume, page numbers (year)
% Format for books
%\bibitem{RefB}
%Author, Book title, page numbers. Publisher, place (year)
% etc
%\end{thebibliography}

\end{document}